\thanks{Version of \today.}
\subjclass[2000]{03C10 (03C60, 12H05)}
\title{Fields with several commuting derivations}
\author{David Pierce}
\address{Mathematics Department\\
Mimar Sinan Fine Arts University\\
Istanbul, Turkey}
\email{dpierce@msgsu.edu.tr}
\urladdr{http://mat.msgsu.edu.tr/~dpierce}
\renewcommand{\leq}{\leqslant}
\renewcommand{\geq}{\geqslant}
\renewcommand{\phi}{\varphi}
\renewcommand{\land}{\mathrel{\&}}
\newcommand{\myboxed}[2]{$#1$#2} 
\newcommand{\defn}[2]{\textbf{#1#2}}   
\newtheorem{theorem}{Theorem}[section]
\newtheorem{lemma}[theorem]{Lemma}
\newtheorem{corollary}[theorem]{Corollary}
\newcommand{\qsep}{\;}                 
\newcommand{\Forall}[1]{\forall{#1}\qsep }
\newcommand{\Exists}[1]{\exists{#1}\qsep }
\newcommand{\Iff}{\iff}
\newcommand{\tuple}[1]{\bm{#1}}
\newcommand{\lto}{\Rightarrow}
\newcommand{\str}[1]{\mathfrak{#1}}            
\newcommand{\diag}[1]{\operatorname{diag}(#1)} 
\newcommand{\robfn}[1]{\widehat{#1}}   
\newcommand{\Th}[1]{\operatorname{Th}(#1)}     
\newcommand{\Mod}[1]{\operatorname{Mod}(#1)}   
\newcommand{\proves}{\vdash}
\newcommand{\thy}[1]{\mathrm{#1}}      
\newcommand{\ACF}{\thy{ACF}}           
\newcommand{\SCF}{\thy{SCF}}           
\newcommand{\DF}{\thy{DF}}             
\newcommand{\DPF}{\thy{DPF}}           
\newcommand{\ACFA}{\thy{ACFA}}         
\newcommand{\mDF}{\text{$m$-$\thy{DF}$}} 
\newcommand{\DCF}{\thy{DCF}}           
\newcommand{\mDCF}{\text{$m$-$\thy{DCF}$}}  
\newcommand{\included}{\subseteq}      
\newcommand{\pincluded}{\subset}       
\newcommand{\size}[1]{\lvert#1\rvert}
\newcommand{\vnn}{\upomega}        
\newcommand{\tleq}{\trianglelefteqslant} 
\newcommand{\tl}{\vartriangleleft} 
\newcommand{\ichar}{\bm i}         
\newcommand{\jchar}{\bm j}         
\newcommand{\kchar}{\bm k}         
\newcommand{\Char}[1]{\operatorname{char}(#1)} 
\newcommand{\sep}[1]{#1^{\mathrm{sep}}} 
\newcommand{\alg}[1]{#1^{\mathrm{alg}}} 
\newcommand{\Aff}{\mathbb{A}}       
\newcommand{\lrv}{E}               
\DeclareMathOperator{\dee}{d}      
\begin{document}

  \begin{abstract}
  For every natural number $m$,
the existentially closed models of the theory of fields with $m$ commuting
    derivations can be given a first-order geometric characterization in several ways.
    In particular, the theory of these differential fields has a model-companion.
    The axioms are that 
    certain differential varieties
    determined by certain ordinary varieties are nonempty.  There is no restriction on the characteristic of the underlying field.
  \end{abstract}

 \maketitle


How can we tell whether a given system of partial differential equations
has a solution?  An answer given in this paper is that, if we
differentiate the equations enough times, and no contradiction arises,
then it never will, and the system is soluble.  Here, the meaning of
`enough times'
can be expressed \emph{uniformly}; this is one way of showing that the
theory of fields with a given
finite number of commuting derivations has a model-companion.
In fact, this theorem is worked out here (as Corollary~\ref{cor:dcf},
of Theorem~\ref{thm:dcf}), not in terms of polynomials, but 
in terms of the varieties that they define, and the function-fields of
these: in a word, the treatment is \emph{geometric.}

The
theory of fields with 
$m$ commuting derivations will be called here $\mDF$; its model-companion, $\mDCF$.  A specified characteristic can be indicated by a subscript.
The model-companion of $\mDF_0$ (in
characteristic~$0$) has been axiomatized before, explicitly in terms
of differential polynomials: see \S \ref{sect:several}.  The existence
of a model-companion of $\mDF$ (with no specified characteristic) appears to
be a new result when $m>1$ (despite a remark by Saharon Shelah
\cite[p.~315]{MR0344116}: `I am
quite sure that for characteristic $p$ as well, [making $m$ greater
  than $1$] does not make any essential
difference').

The theory of model-companions and model-completions was worked out
decades ago; perhaps for that very reason, it may be worthwhile to
review the theory here, as I do in \S \ref{sect:mt}.  
In \S \ref{sect:one}, I review the various known characterizations of
existentially closed fields with single derivations.
In fact, little of this work is of use in the passage to several derivations; but this near-irrelevance is itself interesting.
In \S \ref{sect:several}, I analyze the error of my earlier
attempt, in \cite{MR2000487}, to axiomatize $\mDCF_0$
in terms of
differential forms.  Something of value from this earlier work does remain: when we do have $\mDCF_0$, or more generally $\mDCF$, then we can obtain from it a model-companion of the theory of fields with $m$ derivations whose linear span over the field is closed under the Lie bracket.
In \S \ref{sect:resolution}, I obtain $\mDCF$ itself.

\section{Model-theoretic background}\label{sect:mt}

I try in this section
to give original references, when
I have been able to consult them.  An exposition can also be found for example in Hodges \cite{MR94e:03002} (particularly chapter 8). 

Let $\Gamma$ be a
set of (first-order) sentences in some signature; the structures that have this signature and are models of $\Gamma$ compose the
class denoted by
\myboxed{\Mod{\Gamma}}.
Every class $\bm K$ of structures of some signature has a \textbf{theory,} denoted by
\myboxed{\Th{\bm K}}; this is the set of sentences in the signature that are true in each of the structures in $\bm K$.  Immediately $\bm K\included\Mod{\Th{\bm K}}$; in case of
equality, $\bm K$ is called \defn{elementary}.   Similarly, $\Gamma\included\Th{\Mod{\Gamma}}$; in case of equality, that is, in case $\Gamma$ is actually the theory of $\Mod{\Gamma}$, then $\Gamma$ is called a \textbf{theory,} simply.  So there is a Galois correspondence between elementary classes and theories.

Let $\str M$ be an arbitrary (first-order) structure; the theory of $\{\str M\}$ is denoted by \myboxed{\Th{\str M}}.
The structure $\str M$ has the universe $M$.  The structure denoted by
$\str M_M$ is the expansion of $\str M$ that has a name for every
element of $M$. 
Then $\str M$ embeds in $\str N$ if and only if $\str M_M$ embeds in
an expansion of $\str N$.  The class of
structures in 
which $\str M$ embeds need not be elementary: for example, $\str M$ could be an uncountable model of a countable theory.
However, the class of
structures in which $\str M_M$ embeds \emph{is} elementary.  The
theory of the latter class is the \defn{diagram}{} of $\str M$, or
\myboxed{\diag{\str M}}: it is
axiomatized by the quantifier-free sentences in $\Th{\str M_M}$
\cite[Thm 2.1.3, p.~24]{MR0153570}. 
A model of $\Th{\str M_M}$ itself is just a structure in
which $\str M_M$
embeds \textbf{elementarily.}  Thus the class of such structures is elementary.
The class of
substructures of models of a theory $T$ is elementary, and its theory is
denoted by \myboxed{T_{\forall}}: this is axiomatized by the universal
sentences of $T$ \cite[Thm 3.3.2, p.~71]{MR0153570}.

By a \defn{system over}{} $\str M$, I mean a finite conjunction of
atomic and negated atomic formulas in the signature of $\str M_M$;
likewise, a system \defn{over}{} a theory $T$ is in the signature
of $T$.  A structure $\str M$ \defn{solves}{} a system $\phi(\tuple x)$
if $\str M\models\Exists{\tuple x}\phi(\tuple x)$.  Note well here that
$\tuple x$, in boldface, is a \emph{tuple} of variables, perhaps
$(x^0,\dots,x^{n-1})$.  By an
\defn{extension}{} of a model of $T$, I mean another model of $T$ of
which the first is a substructure.  Two systems over a model $\str M$
of $T$ will be called \defn{equivalent}{} if they are soluble in precisely the same extensions.

An \defn{existentially closed}{} model of $T$ is a model of $T$ that
solves every system over
itself that is soluble in some extension.  So a model $\str M$ of $T$ is
existentially closed if and only if
$T\cup\diag{\str M}\proves\Th{\str M_M}_{\forall}$, that is, every
extension of $\str M$ is a substructure of an elementary extension
  (\cite[\S 7]{MR0277372} or \cite[\S 2]{MR51:12518}).

A theory is \defn{model-complete}{} if its every
model is existentially closed.  An equivalent formulation explains the
name: $T$ is model-complete if and only if $T\cup\diag{\str M}$ is
complete whenever $\str M\models T$
\cite[Ch.~2]{MR0472504}.  

Suppose every model of $T$ has an existentially closed extension.
Such is the case when $T$ is \defn{inductive}, that is, $\Mod{T}$ is
closed under unions of chains \cite[Thm~7.12]{MR0277372}: equivalently,
$T=T_{\forall\exists}$ \cite{MR0089813,MR0103812}.  Suppose further
that we have a uniform
first-order way to tell when systems over models of $T$ are soluble in
extensions: more precisely, suppose there is a function
\begin{equation}\label{eqn:fn}
\phi(\tuple x,\tuple y)\longmapsto
\robfn{\phi}(\tuple x,\tuple y),
\end{equation}
where 
$\phi(\tuple x,\tuple y)$ ranges over the systems over $T$ (with
variables analyzed as shown),
such that, for every model $\str
M$ of $T$ and every tuple $\tuple a$ of parameters from $M$, the
system $\phi(\tuple x,\tuple a)$ is soluble in some extension of $\str
M$ just in case $\robfn{\phi}(\tuple x,\tuple a)$ is soluble in $\str M$.
Then the existentially closed
models of $T$ compose an elementary class, whose theory $T^*$ is
axiomatized by $T$ together with the sentences
\begin{equation}\label{eqn:sen}
  \Forall{\tuple y}(\Exists{\tuple x}\robfn{\phi}(\tuple x,\tuple
  y)\lto
\Exists{\tuple x}\phi(\tuple x,\tuple y)).
\end{equation}
Immediately, $T^*$ is model-complete, so $T^*\cup\diag{\str M}$ is
complete when $\str M\models T^*$.  What is more,
$T^*\cup\diag{\str M}$ is complete whenever $\str M\models
T$ \cite[Thm 5.5.1]{MR0153570}.

In general, $T^*$ is a \defn{model-completion}{} of $T$ if 
$T^*{}_{\forall}\included T\included T^*$
and 
$T^*\cup\diag{\str M}$ is
complete whenever $\str M\models T$.  Model-completions are unique
\cite[(2.8)]{MR0091922}.   We have sketched the proof of part of the
following; the rest is \cite[(3.5)]{MR0091922}.

\begin{lemma}[Robinson's Criterion]
\label{lem:rob-crit} 
  Let $T$ be inductive.  Then $T$ has a model-comple\-tion if and
  only if a function $\phi(\tuple x,\tuple
  y)\mapsto\robfn{\phi}(\tuple x,\tuple y)$ exists as
  in~\eqref{eqn:fn}. 
In this case, the model-completion is axiomatized \emph{modulo} $T$ by
  the sentences in~\eqref{eqn:sen}.
\end{lemma}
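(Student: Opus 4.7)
The forward implication is essentially the content of the discussion just preceding the lemma: given $\phi \mapsto \robfn{\phi}$ as in~\eqref{eqn:fn}, the theory $T^*$ axiomatized over $T$ by the sentences~\eqref{eqn:sen} has $T^* \cup \diag{\str M}$ complete for every $\str M \models T$, while the inclusions $T^*{}_{\forall} \included T \included T^*$ are immediate from the shape of those axioms. My task is therefore the converse: assuming $T^*$ is a model-completion of $T$, I must produce the required function $\phi \mapsto \robfn{\phi}$.

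The plan is to exploit quantifier-elimination for $T^*$, and this is what I expect to be the main obstacle. The preliminary observation is that $T^*{}_{\forall} \included T \included T^*$ forces $T_{\forall} = T^*{}_{\forall}$, so every model of $T$ embeds into some model of $T^*$ and every substructure of a model of $T^*$ is also a substructure of a model of $T$. The subtle point is that the hypothesis asserts completeness of $T^* \cup \diag{\str M}$ only for $\str M \models T$, not for an arbitrary substructure $\str A$ of a model of $T^*$; but the identity $T_{\forall} = T^*{}_{\forall}$ lets one pass from such an $\str A$ to a model $\str M$ of $T$ containing it, and the usual amalgamation-and-compactness argument then yields full quantifier-elimination for $T^*$. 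Applied to the existential formula $\Exists{\tuple x}\phi(\tuple x,\tuple y)$, this provides a quantifier-free formula $\robfn{\phi}(\tuple y)$ with
\begin{equation*}
  T^* \proves \Forall{\tuple y}\bigl(\Exists{\tuple x}\phi(\tuple x,\tuple y) \Iff \robfn{\phi}(\tuple y)\bigr).
\end{equation*}

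To confirm that this $\robfn{\phi}$ satisfies the property demanded in~\eqref{eqn:fn}, fix $\str M \models T$ and a tuple $\tuple a$ from $M$. If $\str M \models \robfn{\phi}(\tuple a)$, embed $\str M$ into some $\str N \models T^*$ (possible because $\str M \models T^*{}_{\forall}$); preservation of quantifier-free formulas gives $\str N \models \robfn{\phi}(\tuple a)$, whence $\str N \models \Exists{\tuple x}\phi(\tuple x,\tuple a)$, so $\str N$ is an extension of $\str M$ solving $\phi$. Conversely, if some $\str N \models T$ extends $\str M$ and solves $\phi(\tuple x,\tuple a)$, embed $\str N$ into $\str N' \models T^*$; the existential witness persists into $\str N'$, so $\str N' \models \Exists{\tuple x}\phi(\tuple x,\tuple a)$, hence $\str N' \models \robfn{\phi}(\tuple a)$ by the equivalence above, and downward preservation of this quantifier-free formula along $\str M \included \str N'$ yields $\str M \models \robfn{\phi}(\tuple a)$, as required.
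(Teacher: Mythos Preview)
Your argument has a genuine gap at the quantifier-elimination step. You correctly flag the subtlety: the hypothesis gives completeness of $T^*\cup\diag{\str M}$ only for $\str M\models T$, not for arbitrary substructures $\str A$ of models of $T^*$. But your proposed remedy---embed $\str A$ into some $\str M\models T$---does not close the gap. To show $T^*\cup\diag{\str A}$ is complete you must compare two models $\str N_1,\str N_2\models T^*$ containing $\str A$; your auxiliary $\str M$ has no reason to embed into either $\str N_i$ over $\str A$, so the hypothesis on $\str M$ is unavailable. Amalgamating $\str M$ with $\str N_i$ over $\str A$ would itself require amalgamation over a non-model of $T$, which is exactly what is at issue. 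The phrase ``the usual amalgamation-and-compactness argument'' thus presupposes the very substructure-completeness you are trying to establish.

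Full quantifier elimination is in any case more than is required, and the paper does not claim it: note that $\robfn\phi$ is a system in $(\tuple x,\tuple y)$, so one only needs an \emph{existential} formula in $\tuple y$. Robinson's argument (cited by the paper for this direction) gets this directly from completeness of $T^*\cup\diag{\str M}$ over models of $T$, without passing to substructures. By compactness, $T^*\cup\diag{\str M}\proves\Exists{\tuple x}\phi(\tuple x,\tuple a)$ holds exactly when $\str M\models\Exists{\tuple z}\delta(\tuple z,\tuple a)$ for some quantifier-free $\delta$ with $T^*\proves\Forall{\tuple z\,\tuple y}(\delta\to\Exists{\tuple x}\phi)$; dually for $\neg\Exists{\tuple x}\phi$. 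Completeness says exactly one alternative holds for each $(\str M,\tuple a)$ with $\str M\models T$, so compactness applied to $T$ collapses each infinite disjunction to a finite one; the resulting finite disjunction on the positive side is the desired $\Exists{\tuple x}\robfn\phi(\tuple x,\tuple y)$. Your verification paragraph then goes through unchanged with this $\robfn\phi$.
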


If $T_{\forall}=T^*{}_{\forall}$ and $T^*$ is model-complete, then
$T^*$ is a \defn{model-companion}{} of~$T$
(\cite[\S 5]{MR0272613}; \emph{cf.}~\cite[\S 2]{MR0277372}).
Model-completions are model-companions, and model-companions are
unique \cite[Thm 5.3]{MR0272613}.
If $T$ has a
model-companion, then its models are just the existentially closed
models of $T$ \cite[Prop.~7.10]{MR0277372}.
Conversely, if $T$ is inductive,
and the class of existentially closed models 
of $T$ is elementary, then the theory of this class is the
model-companion of $T$ \cite[Cor.~7.13]{MR0277372}.

\section{Fields with one derivation}\label{sect:one}

Let \myboxed{\DF}{} be the theory of fields with a derivation $D$, and let
\myboxed{\DPF}{} be 
the theory of models of $\DF$ that, for each prime $\ell$, satisfy also
\begin{equation*}
  \Forall x\Exists y(\ell=0\land
  Dx=0\lto y^{\ell}=x),
\end{equation*}
where the first occurrence of $\ell$ stands for $1+\dots+1$ (with $\ell$ occurrences of $1$); and $y^{\ell}$ stands for $y\dotsm y$ (with $\ell$ occurrences of $y$).
Models of $\DPF$ are called \defn{differentially perfect}.  A
subscript on the name of one of these theories will indicate a 
required characteristic for the field.  In particular, we have $\DPF_0$,
which is the same as $\DF_0$. 

Abraham Seidenberg
\cite{MR0082487} shows the existence of the function
in Lemma~\ref{lem:rob-crit} in case
$T$ is $\DPF_p$, where $p$ is prime or $0$.  Consequently:

\begin{theorem}[Robinson]
  $\DF_0$ has a model-completion, called \myboxed{\DCF_0}.
\end{theorem}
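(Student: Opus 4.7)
The plan is to apply Robinson's Criterion (Lemma~\ref{lem:rob-crit}) with $T = \DF_0$. Since $\DPF_0$ coincides with $\DF_0$, Seidenberg's theorem, cited just before the statement, supplies the function $\phi(\tuple x,\tuple y)\mapsto\robfn{\phi}(\tuple x,\tuple y)$ of~\eqref{eqn:fn}. Only the inductivity of $\DF_0$ then remains to be verified; given that, Lemma~\ref{lem:rob-crit} yields the model-completion, axiomatized \emph{modulo} $\DF_0$ by the sentences in~\eqref{eqn:sen}. This completion is what will be called $\DCF_0$.

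For inductivity, I would observe that the axioms of $\DF_0$ are all universal: the field axioms, the additivity and Leibniz identities for $D$, and, for each $n \geq 1$, the sentence $\underbrace{1+\dotsb+1}_n \neq 0$ that expresses characteristic zero. The union of a chain of models of any universal theory is again a model, so $\Mod{\DF_0}$ is closed under unions of chains; in fact $(\DF_0)_{\forall} = \DF_0$, which is inductivity in the strongest sense.

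The substance of the theorem therefore rests on Seidenberg's construction, and this is the main obstacle. Given a differential polynomial system $\phi(\tuple x,\tuple a)$ over $\str M \models \DF_0$, Seidenberg shows that $\phi$ is soluble in some differential extension of $\str M$ if and only if a suitable prolongation of $\phi$, obtained by differentiating the defining equations a bounded number of times and treating the resulting higher derivatives of the entries of $\tuple x$ as new algebraic indeterminates, defines a nonempty variety over $\str M$ in the ordinary, non-differential sense. The crucial point is that the required number of differentiations can be bounded in terms of the syntactic shape of $\phi$ alone, so that the resulting condition on $\str M$ is expressible as a single first-order formula $\robfn{\phi}(\tuple x,\tuple a)$ in the language of $\DF_0$. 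With that function in hand, Robinson's Criterion completes the proof.
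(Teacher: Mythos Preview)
Your approach is exactly the paper's: Seidenberg supplies the function of~\eqref{eqn:fn} for $\DPF_0=\DF_0$, and Robinson's Criterion then gives the model-completion.

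There is one slip in your inductivity argument. The field axioms are \emph{not} all universal: the existence of multiplicative inverses is a $\forall\exists$-sentence in the usual signature $\{+,-,\cdot,0,1\}$, so $(\DF_0)_{\forall}\neq\DF_0$; the models of $(\DF_0)_{\forall}$ include all characteristic-$0$ integral domains equipped with a derivation. What you need, and what is true, is the weaker claim that $\DF_0$ is inductive: the union of a chain of differential fields of characteristic~$0$ is again such a field, since the derivation axioms and the sentences $\underbrace{1+\dotsb+1}_n\neq0$ are universal, and the inverse axiom is preserved under directed unions. With that correction your argument goes through.
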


\begin{theorem}[Wood \cite{MR48:8227}]
  If $p$ is prime, then $\DF_p$ has a model-companion, called
  \myboxed{\DCF_p}, which is the model-completion of $\DPF_p$. 
\end{theorem}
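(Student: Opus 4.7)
The plan is to piece the statement together from Seidenberg's work (quoted just above) together with Robinson's Criterion (Lemma \ref{lem:rob-crit}) and the abstract facts about model-companions reviewed in \S\ref{sect:mt}. By hypothesis Seidenberg supplies a function $\phi(\tuple x,\tuple y)\mapsto \robfn{\phi}(\tuple x,\tuple y)$ as in~\eqref{eqn:fn} for $T=\DPF_p$. Since the axioms for a field with a derivation are universal and differential perfectness is $\forall\exists$, the theory $\DPF_p$ is inductive, so Robinson's Criterion applies and yields a model-completion of $\DPF_p$; this is the theory we christen $\DCF_p$. In particular $\DCF_p$ is model-complete.

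To promote this to a model-companion of the weaker theory $\DF_p$, I would show that
\begin{equation*}
  (\DF_p)_{\forall} = (\DPF_p)_{\forall}.
\end{equation*}
One inclusion is free, because $\Mod(\DPF_p)\subseteq\Mod(\DF_p)$. For the other, it suffices to prove that every $(K,D)\models\DF_p$ embeds in some $(L,D')\models\DPF_p$; then every substructure of a model of $\DF_p$ is also a substructure of a model of $\DPF_p$, giving the reverse inclusion.

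The embedding is obtained by iterated adjunction of $p$-th roots of constants. The key local step is: if $a\in K$ satisfies $Da=0$ and has no $p$-th root in $K$, then $x^p-a$ is irreducible in $K[x]$, so $K(b):=K[x]/(x^p-a)$ is a purely inseparable extension of degree~$p$; and $D$ extends uniquely to a derivation $D'$ on $K(b)$ by setting $D'b=0$, because the relation $b^p=a$ forces $pb^{p-1}D'b=Da=0$, which is vacuous in characteristic $p$. The new element $b$ is itself a constant of $D'$, so iterating this construction (transfinitely, as a directed union) closes off under $p$-th roots of constants and produces the required differentially perfect extension. This step is the main obstacle: one must verify that the extension of $D$ is genuinely well-defined on a purely inseparable extension (the standard check using that $\{1,b,\dotsc,b^{p-1}\}$ is a basis), and then manage the directed union carefully so that every constant eventually acquires a $p$-th root.

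With $(\DF_p)_{\forall}=(\DPF_p)_{\forall}$ in hand, the final bookkeeping is formal. Since $\DCF_p$ is the model-completion of $\DPF_p$ we have $(\DCF_p)_{\forall}\subseteq\DPF_p$, hence $(\DCF_p)_{\forall}\subseteq(\DPF_p)_{\forall}=(\DF_p)_{\forall}$; and conversely, any model of $\DCF_p$ is a model of $\DF_p$, so $(\DF_p)_{\forall}\subseteq(\DCF_p)_{\forall}$. Combined with the model-completeness already noted, this shows that $\DCF_p$ is the model-companion of $\DF_p$, as required.
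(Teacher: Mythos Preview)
Your proposal is correct and follows essentially the approach the paper indicates: the paper's only ``proof'' is the single word \emph{Consequently}, pointing to Seidenberg's elimination result together with Robinson's Criterion (Lemma~\ref{lem:rob-crit}), and you have simply spelled out the details, including the one step the paper leaves entirely implicit, namely that $(\DF_p)_{\forall}=(\DPF_p)_{\forall}$ via embedding any $(K,D)\models\DF_p$ into a differentially perfect extension by iterated adjunction of $p$-th roots of constants.
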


The existence of a model-companion or model-completion of a theory does not
necessarily tell us much about the existentially closed models of the
theory.  Since it involves \emph{all} systems over a given theory,
Robinson's criterion yields the crudest possible axiomatization for a
model-completion.  In the case of $\DCF_p$ (again where $p$ is prime or $0$), there are two ways of
refining the axiomatization---refining in the sense of finding seemingly weaker
conditions on models of $\DF_p$ that are still sufficient for being
existentially closed.  It suffices to consider either systems in only one
variable or systems involving only first derivatives.  In the
generalization to several derivations, the former
refinement seems to be of little use; the latter refinement is of use
indirectly, through its introduction of geometric ideas.

\subsection{Single variables}\label{subsect:one-var}

Though the theory \myboxed{\ACF}{} of
algebraically closed fields is the model-completion of the theory of
fields, its axioms (\emph{modulo} the latter theory) can involve only
systems in one variable (indeed, single equations in one variable).
A generalization of this observation is the following, which can be
extracted from the proof of \cite[Thm~17.2, pp.~89--91]{MR0398817}
(see also \cite{MR0491149}):

\begin{lemma}[Blum's Criterion]\label{lem:blum}
  Say $T^*{}_{\forall}\included T\included T^*$.
  \begin{enumerate}
    \item
The theory $T^*$ is the model-completion of $T$ if and only if the
  commutative diagram
  \begin{equation*}
    \xymatrix{
\str M &\\
\str A \ar[u] \ar[r] & \str B \ar@{.>}[ul]
}
  \end{equation*}
of structures and embeddings
can be completed as indicated when
$\str A$ and $\str B$ are models of $T$ and $\str M$ is a
$\size{B}^+$-saturated model of $T^*$.
\item\label{item:blum2}
If $T=T_{\forall}$, it is enough to assume that $\str B$ is generated
over $\str A$
by a single element.
  \end{enumerate}
\end{lemma}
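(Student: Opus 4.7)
For the forward direction of (i), we assume $T^*$ is the model-completion of $T$. Given $\str A \included \str B$ in $\Mod T$ and a $\size{B}^+$-saturated $\str M \models T^*$ with $\str A \included \str M$, the hypothesis $T^*{}_\forall \included T$ allows us to extend $\str B$ to some $\str B^* \models T^*$. Both $\str B^*$ and $\str M$ are then models of the complete theory $T^* \cup \diag{\str A}$, so they are elementarily equivalent over $\str A$. We realize the type of $B$ over $A$ witnessed by $\str B^*$ inside $\str M$ by a transfinite recursion along an enumeration of $B$, extending a partial elementary map over $\str A$ by realizing one-variable types that always stay within the $\size{B}^+$-saturation budget. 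The resulting map is the required embedding $\str B \to \str M$ over $\str A$.

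For the backward direction, we assume the lifting property and must verify that $T^* \cup \diag{\str A}$ is complete for each $\str A \models T$. Given models $\str N_1, \str N_2$ of this theory, the plan is to pass to elementary extensions $\str M_i \succeq \str N_i$ in $\Mod{T^*}$, chosen $\kappa$-saturated of cardinality $\kappa$ for some large regular $\kappa$, and to construct an isomorphism $\str M_1 \cong_{\str A} \str M_2$ by transfinite back-and-forth of length $\kappa$; this yields $\str N_1 \equiv_{\str A} \str N_2$ via $\str N_i \preceq \str M_i$. At a typical stage we carry an isomorphism $\str C_1 \to \str C_2$ between small submodels $\str C_i \included \str M_i$ of $T$ extending $\str A$. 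To add a new element $d \in \str M_1$, we take (by L\"owenheim--Skolem) a small elementary substructure $\str D_1 \preceq \str M_1$ containing $\str C_1 \cup \{d\}$, which models $T^* \supseteq T$; the lifting property, with $\str M_2$ as the saturated target and the embedding $\str C_1 \to \str M_2$ supplied by the current partial isomorphism, extends that map to an embedding $\str D_1 \to \str M_2$. The image is another submodel of $T$, so the construction iterates, alternating sides to cover both $\str M_1$ and $\str M_2$.

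For part (ii), we exploit $T = T_\forall$: every substructure of a model of $T$ is itself a model of $T$. Given $\str A \included \str B \models T$ and $\str M$ as in the statement, enumerate $B = \{b_\alpha : \alpha < \lambda\}$ with $\lambda = \size{B}$ and build a compatible increasing chain of embeddings $f_\alpha \colon \str A \langle b_\beta : \beta < \alpha \rangle \to \str M$ over $\str A$ by transfinite recursion. At each successor stage the source is singly generated over its predecessor by $b_\alpha$ and has cardinality at most $\lambda$, so the single-generator form of the lifting property applies (the $\size{B}^+$-saturation of $\str M$ being enough) to produce $f_{\alpha+1}$. At limits take unions; the union is the desired embedding.

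The main obstacle is the back-and-forth in the backward direction of (i). The lifting property requires both endpoints to be models of $T$, whereas arbitrary finitely generated substructures of $\str M_i$ need not be. Working with small elementary substructures of $\str M_i$ (automatically models of $T^* \supseteq T$) sidesteps this at the cost of transfinite induction and careful tracking of saturation degrees; part (ii) avoids the issue outright, since $T = T_\forall$ restores the classical element-by-element back-and-forth.
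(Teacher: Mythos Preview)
Your argument is correct and follows the standard line: the forward direction of~(i) is the usual realization of types in a saturated model after passing to an extension in $\Mod{T^*}$; the backward direction is a back-and-forth between large saturated elementary extensions, using small \emph{elementary} substructures (hence models of $T^*\supseteq T$) as the stages so that the lifting hypothesis applies; and part~(ii) reduces the general lifting property to the single-generator case by transfinite recursion, which is legitimate precisely because $T=T_\forall$ makes every intermediate generated substructure a model of $T$.

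The paper does not give its own proof of this lemma: it only states that the result ``can be extracted from the proof of \cite[Thm~17.2, pp.~89--91]{MR0398817} (see also \cite{MR0491149}).'' Your write-up is essentially a clean extraction of that argument, so there is nothing further to compare.

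One small point worth making explicit in your backward direction: the back-and-forth begins with $\str C_1=\str C_2=\str A$ (which is a model of $T$ by hypothesis), and at each stage the image of the embedding supplied by the lifting property is isomorphic to a model of $T$, so is again a model of $T$; this is what lets the alternating steps iterate. You allude to this (``the image is another submodel of $T$''), but it is the point on which the whole induction rests.
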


This allows a refinement of Lemma~\ref{lem:rob-crit} in a special
case:

\begin{lemma}
  Suppose $T=T_{\forall}$.  Then Lemma~\ref{lem:rob-crit} still holds
  when $\phi(\tuple x,\tuple y)$ is replaced with
  $\phi(x,\tuple y)$ (where $x$ is a single variable).
\end{lemma}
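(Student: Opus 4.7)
The plan is to reproduce the proof of Lemma~\ref{lem:rob-crit}, routing the nontrivial step through Lemma~\ref{lem:blum}(\ref{item:blum2}) so that the variable count stays at one on the side of $\str B$. Let $T$ be inductive with $T=T_{\forall}$, and suppose that a function $\phi(x,\tuple y)\mapsto\robfn\phi(x,\tuple y)$ exists for single-variable systems, as in~\eqref{eqn:fn}. Define $T^*$ to be $T$ together with the sentences
\[\Forall{\tuple y}(\Exists{x}\robfn\phi(x,\tuple y)\to\Exists{x}\phi(x,\tuple y)),\]
one for each single-variable system $\phi(x,\tuple y)$ over $T$.

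First I would verify that $T\included T^*$ and $T^*{}_{\forall}\included T$. The former is a consistency claim: starting from any $\str A\models T$, I build a chain by adjoining, at each successor stage, a solution to some $\phi(x,\tuple a)$ whenever $\robfn\phi(x,\tuple a)$ is already soluble in the current structure, taking unions at limits (permitted by the inductivity of $T$). Standard bookkeeping then exhausts all single-variable systems over the eventual structure and produces a model of $T^*$ extending $\str A$. Since $T=T_{\forall}$ and $T\included T^*$, any substructure of a model of $T^*$ is a model of $T$, whence $T^*{}_{\forall}\included T$.

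To conclude, I would apply Lemma~\ref{lem:blum}(\ref{item:blum2}): given $\str A,\str B\models T$ with $\str B$ generated over $\str A$ by a single element $b$, and $\str M\models T^*$ that is $\size{B}^+$-saturated and contains $\str A$, it is enough to find $b'\in M$ realizing the quantifier-free type of $b$ over $A$. For each single-variable system $\phi(x,\tuple a)$ in that type, $\phi(x,\tuple a)$ is soluble in the extension $\str B$ of $\str A$, so by hypothesis $\robfn\phi(x,\tuple a)$ is soluble in $\str A$ and hence in $\str M$; the axiom of $T^*$ then yields a solution of $\phi(x,\tuple a)$ in $\str M$. Any finite conjunction of such systems is itself such a system, so $\size{B}^+$-saturation of $\str M$ realizes the whole quantifier-free type at once. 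The map $b\mapsto b'$ fixing $\str A$ gives the required embedding $\str B=\str A\langle b\rangle\hookrightarrow\str M$ over $\str A$, and Lemma~\ref{lem:blum} tells us that $T^*$ is the model-completion of $T$.

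The principal obstacle is the passage from single-variable solubility to solubility of multi-variable systems, and it is exactly Lemma~\ref{lem:blum}(\ref{item:blum2}) that defuses this: because $T=T_{\forall}$, only diagrams in which $\str B$ is singly generated over $\str A$ need completion, and for these the quantifier-free type of the generator is fully captured by single-variable systems $\phi(x,\tuple a)$. The converse direction of the equivalence is immediate: if $T$ has a model-completion, the function supplied by the unrestricted Lemma~\ref{lem:rob-crit} restricts to one on single-variable systems.
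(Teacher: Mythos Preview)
Your proposal is correct and follows exactly the route the paper signals: the lemma is stated immediately after Blum's Criterion with the remark ``This allows a refinement of Lemma~\ref{lem:rob-crit},'' and no separate proof is given. Your argument supplies the intended details, using part~(\ref{item:blum2}) of Lemma~\ref{lem:blum} to reduce the amalgamation problem to singly generated extensions and then matching the quantifier-free type of the generator against the single-variable axioms of $T^*$; the chain construction you outline is the standard way to secure $T^*{}_{\forall}\included T$ before invoking Blum.
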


From Lemma~\ref{lem:blum},
Lenore Blum obtains Theorem~\ref{thm:dcf-1} below in characteristic
$0$, in which case the first two numbered conditions amount to
$K\models\ACF$ (\cite[pp.~298~ff.]{MR0398817} or~\cite{MR0491149}).
If $p>0$, then $\DPF_p$ is not universal, so
part~\eqref{item:blum2} of Blum's criterion does 
\emph{not} apply; Carol Wood instead uses a
primitive-element theorem of Seidenberg \cite{MR0049174} to obtain new
axioms for $\DCF_p$ \cite{MR50:9577}.  These can be combined with
Blum's axioms for $\DCF_0$ to yield the following.  (Here \myboxed{\SCF}{} is
the theory of separably closed fields.)

\begin{theorem}[Blum, Wood]\label{thm:dcf-1}
A model $(K,D)$ of $\DF$ is existentially closed if and only if
\begin{enumerate}
\item
$K\models\SCF$;
  \item
$(K,D)\models\DPF$;
\item
$(K,D)\models\Exists x(f(x,Dx,\dots,D^{n+1}x)=0\land
g(x,Dx,\dots,D^nx)\neq0)$ whenever $f$ and $g$ are ordinary
polynomials over $K$ in tuples $(x^0,\dots, x^{n+1})$ and
$(x^0,\dots,x^n)$ of variables
respectively such that $g\neq0$ and
$\partial f/\partial x^{n+1}\neq0$. 
\end{enumerate}
Hence $\DF$ has a model-companion, called \myboxed{\DCF}.
\end{theorem}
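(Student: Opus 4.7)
The plan is to verify Theorem~\ref{thm:dcf-1} by applying Blum's Criterion (Lemma~\ref{lem:blum}). In characteristic~$0$, $\DF_0=\DPF_0$ is universal, so part~(\ref{item:blum2}) of that lemma applies directly; in characteristic~$p$, we work instead with the base theory $\DPF_p$, and appeal to Seidenberg's differential primitive-element theorem~\cite{MR0049174} to reduce to the case where the extending structure is generated over the base by a single element.

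For necessity, suppose $(K,D)$ is existentially closed. If $f\in K[x]$ is separable of positive degree, then $D$ extends uniquely to a derivation on $K[x]/(f)$, producing an extension of $(K,D)$ containing a root of~$f$; existential closedness yields a root in $K$, whence (i). Similarly, in characteristic~$p$, setting $D(x^{1/p})=0$ extends $D$ to $K(x^{1/p})$ whenever $Dx=0$, so (ii) follows likewise. For (iii), given $f$ and $g$ as described, one adjoins a differential indeterminate $t$ to $(K,D)$ and passes to the fraction field of $K\{t\}/[f]$ localized at the separant $\partial f/\partial x^{n+1}$ and at $g$: this is a nontrivial differential integral domain in which the desired existential sentence is satisfied, so existential closedness provides a solution already in $K$.

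For sufficiency, assume $(K,D)$ satisfies (i)--(iii), let $\str A\to\str B$ be an embedding of models of the appropriate base theory with $\str A\to(K,D)$, and by the reductions above take $\str B=\str A\langle b\rangle$ for a single element~$b$. Let $\str M$ be a $\size{B}^+$-saturated elementary extension of $(K,D)$; we must embed $\str B$ into $\str M$ over $\str A$. If $b$ is differentially transcendental over~$\str A$, saturation together with~(iii) (applied with trivial $F$) produces a $c\in M$ whose iterated derivatives are algebraically independent over the image of~$A$. If $b$ is differentially algebraic, let $F\in A[x^0,\dots,x^{n+1}]$ be of minimal order $n+1$ and then of minimal degree in $x^{n+1}$ with $F(b,Db,\dots,D^{n+1}b)=0$; by minimality the separant $\partial F/\partial x^{n+1}$ does not vanish at $(b,\dots,D^{n+1}b)$. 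Applying~(iii) to this $F$ together with a polynomial $G$ that captures the algebraic type of $(b,\dots,D^n b)$ over~$A$, and invoking saturation to handle the finitely many nonvanishing conditions that accumulate, yields $c\in M$ realizing the same differential-algebraic type as $b$ over $\str A$.

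The main obstacle is the characteristic-$p$ case: one must handle inseparable phenomena both in the primitive-element reduction and in ensuring that the realization $c$ found in $\str M$ generates a differential subfield isomorphic to $\str B$ over $\str A$. Conditions~(i) (separable closedness of $K$, and hence of $\str M$) and~(ii) (differential perfectness) are precisely what is needed to lift the partial embedding first through the separable closure and then through any purely inseparable steps, where $D$ extends uniquely.
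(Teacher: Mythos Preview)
Your proposal follows precisely the route the paper describes (it gives no detailed proof, only attribution and outline): Blum's Criterion applies directly in characteristic~$0$ since $\DF_0$ is universal, while in positive characteristic one substitutes Seidenberg's primitive-element theorem for part~(\ref{item:blum2}) of that criterion, as Wood does. Your sketch of necessity and sufficiency, and your closing remarks on how conditions~(i) and~(ii) absorb the inseparability issues, are in line with that outline.
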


There is a similar characterization of the existentially closed
\emph{ordered} differential fields \cite{MR495120}.

\subsection{First derivatives}\label{sect:first-der}

Alternative simplified axioms for $\DCF$ are parallel to
those found for the model-companion \myboxed{\ACFA}{} of the theory of fields
with an automorphism \cite{MR99c:03046,MR2000f:03109}.  Suppose
$(K,D)\models\DPF$ and $K\models\SCF$.
Given a system
over $(K,D)$, we can rewrite it so that $D$ is applied only to
variables or derivatives of variables; then we can replace each
derivative with a new variable, obtaining a system
\begin{equation}\label{eqn:1st}
  \bigwedge_ff=0\land g\neq0\land D\tuple x=\tuple y,
\end{equation}
where $f,g\in K[\tuple x,\tuple y]$ and $D\tuple x=\tuple y$ stands for $\bigwedge_iDx^i=y^i$.  We can also incorporate this condition into the rest of the system,  writing
$\bigwedge_ff(\tuple x,D\tuple x)=0\land g(\tuple x,D\tuple x)\neq0$. 
Suppose~\eqref{eqn:1st} has the solution $(\tuple a,\tuple b)$.
Then $K(\tuple a,\tuple b)/K$ is separable
\cite[Lem.~1.5, p.~1328]{MR2114160}.  Let
$V$ and $W$ be the varieties over $K$ with generic points $\tuple a$
and $(\tuple a,\tuple b)$ respectively, let \myboxed{T_D(V)}{} be the
twisted tangent bundle of $V$, and let $U$ be the open subset of $W$
defined by the inequation $g\neq0$.
In characteristic $0$, the model $(K,D)$ of $\DF$ is existentially
closed if and only if, in every such geometric situation, $U$ contains
a $K$-rational point $(\tuple c,D\tuple c)$; this yields the so-called
geometric axioms for $\DCF_0$ found with Anand Pillay
\cite{MR99g:12006}.  In positive characteristic, it is still true
that, if $(\tuple a,\tuple b)$ is a generic point of $V$, then $D$
extends to $K(\tuple a)$ so that $D\tuple a=\tuple b$.  However, an
additional condition is needed to ensure that $D$ extends to all of
$K(\tuple a,\tuple b)$; it is enough to require that the projection of
$T_D(W)$ onto $T_D(V)$ contain a generic point of $W$; this yields
Piotr Kowalski's geometric axioms for $\DCF_p$ \cite{MR2119125}.

By the trick of replacing $g\neq0$ with $z\cdot g=1$, we may
assume that there is no inequation in~\eqref{eqn:1st}.
In an alternative geometric approach to $\DCF$, we can then
consider~\eqref{eqn:1st} as a special case of
\begin{equation}\label{eqn:1-my}
  \bigwedge_ff=0\land\bigwedge_{i<k}Dx^i=g^i, 
\end{equation}
where $f\in K[x^0,\dots,x^{n-1}]$ and $g^i\in
K(x^0,\dots,x^{n-1})$. 
Suppose this has solution $\tuple a$, that is, $(a^0,\dots,a^{n-1})$, which is a generic point of
a variety $V$.  Then we know that $D$ extends so as to map $K(\tuple a)$ into \emph{some} field.  We know too that this extension of $D$ maps the subfield $K(a^0,\dots,a^{k-1})$ of $K(\tuple a)$ into $K(\tuple a)$ itself; indeed, this extension is given by the equations $Da^i=g^i(\tuple a)$.  Now we can extend $D$ further to all of $K(\tuple a)$ so that this becomes a differential field \cite[Lem.~1.2 (3), p.~1326]{MR2114160}.
Going back and picking a new generating tuple for $K(\tuple a)$ as needed,
we may assume that 
$(a^0,\dots,a^{k-1})$ is a separating tran\-scendence-basis of $K(\tuple
a)/K$.  
Then we have a dominant, separable rational map $\tuple
x\mapsto(x^0,\dots,x^{k-1})$ or $\phi$ from $V$ onto $\Aff^k$,
and another rational map $\tuple x\mapsto(g^0(\tuple x),\dots,g^{k-1}(\tuple
x))$ or $\psi$ from $V$ to $\Aff^k$.  So $(K,D)$ is existentially
closed if
and only if $V$ always has a $K$-rational point $P$ such that
$D(\phi(P))=\psi(P)$ \cite[Thm~1.6, p.~1328]{MR2114160}. 

\section{Fields with several derivations}\label{sect:several}

Let \myboxed{\mDF}{} be the theory of fields with $m$ commuting derivations.
Tracey McGrail \cite{MR2001h:03066} axiomatizes the model-completion,
\myboxed{\mDCF_0}, of $\mDF_0$.  Alternative axiomatizations arise as
special cases in 
work of Yoav Yaffe \cite{MR1807840} and Marcus Tressl
\cite{MR2159694}.  There is a common theme:  A differential ideal has
a generating set of a special form; in the terminology of Joseph Ritt
\cite[\S I.5, p.~5]{MR0201431} (when $m=1$) and Ellis Kolchin
\cite[\S I.10, pp.~81~ff.]{MR58:27929}, this is a \emph{characteristic set.}
There is a first-order way to tell, uniformly in the parameters,
whether a given set of differential 
polynomials is a characteristic set of some differential ideal, and
then to tell, if it \emph{is}
a characteristic set, whether it has a root.  In short, the function
$\phi\mapsto\robfn{\phi}$ in Robinson's criterion
(Lemma~\ref{lem:rob-crit}) is defined for
sufficiently many systems $\phi$.  (Applying Blum's criterion, McGrail
and Yaffe consider only systems in one variable, so they must include
inequations in these systems; Tressl uses only equations, in
arbitrarily many variables.)

I do not give the definition of a characteristic set, as not all
ingredients of the definition are needed for the arguments presented
in \S \ref{sect:resolution}.  However, some of the ingredients
\emph{are} needed; these are in~\ref{subsect:terms}.

\subsection{Spaces of derivations}\label{subsect:fga}

In \cite{MR2000487} I attempted to apply the geometric approach
described in~\ref{sect:first-der} to $\mDF_0$.  I worked more
generally with $\DF^m_0$, where \myboxed{\DF^m}{} is the theory of structures
$(K,D_0,\dots,D_{m-1})$ such that $(K,D_i)\models\DF$ for each $i$, and
each bracket $[D_j,D_k]$ is a $K$-linear combination of the $D_i$.
(This is roughly what Yaffe did too.)
In
\cite[\S 2]{MR2114160} I made some minor corrections and otherwise adapted
the argument to arbitrary characteristic.  Nonetheless, in May, 2006,
Ehud Hrushovski showed me a counterexample to \cite[Thm~A,
  p.~926]{MR2000487}, a theorem that was an introductory formulation of
 \cite[Thm~5.7, p.~942]{MR2000487}.  Then I found an error at
 the end of the proof of the latter theorem.  That theorem is simply wrong; the present paper does not so much correct the theorem as replace it.

The developments leading up to the wrong theorem are still of some use.
The general situation is as follows.
Let $(K,D_0,\dots,D_{m-1})\models\DF^m$, and let $\lrv$ be the
$K$-linear span of the $D_i$.  Then $\lrv$ is a Lie-ring, as well as a
vector-space over $K$.  As a vector-space, $\lrv$ has a dual,
\myboxed{\lrv^*}; and there is a derivation \myboxed{\dee}{} from $K$ into
$\lrv^*$ given by
\begin{math}
  D(\dee x)=Dx
\end{math}.
Then $\lrv^*$ has a basis $(\dee t^i\colon i<\ell)$ for some $t^i$ in
$K$ and some $\ell$ no greater than $m$ \cite[Lem.~4.4,
  p.~932]{MR2000487}, and this basis is dual to a basis
$(\partial_i\colon i<\ell)$ of $\lrv$, where 
$[\partial_i,\partial_j]=0$ in each case, and $\dee$ can be given by
\begin{equation}\label{eqn:dee}
  \dee x=\sum_{i<\ell}\dee t^i\cdot\partial_ix
\end{equation}
\cite[Lem.~4.7, p.~934]{MR2000487}.
We can use these ideas to prove the following.

\begin{theorem}\label{thm:if-mDF}
If $\mDF$ has a model-companion, then so does $\DF^m$.
\end{theorem}

\begin{proof}
In characteristic $0$, the result is implicit in \cite[Thm~5.3 and proof]{MR2000487}, explicit in
\cite[\S 3]{MR2286106}; but the proof works generally.  The main point is to find, for any model $(K,D_0,\dots,D_{m-1})$ of $\DF^m$, an extension
in which the named derivations are linearly independent over the larger field.  As above, the space spanned over $K$ by the $D_i$ has a basis $(\partial_i\colon i<\ell)$ of commuting derivations of $K$.
If $\ell<m$, then let $L=K(\alpha^{\ell},\dots,\alpha^{m-1})$,
where
$(\alpha^{\ell},\dots,\alpha^{m-1})$ is algebraically
independent over $K$.
Extend the $\partial_i$ to $L$ so that they
are $0$ at the $\alpha^j$;
then, if $\ell\leq k<m$, define $\partial_k$ to be $0$ on $K$ and to be
$\updelta_k^j$ at $\alpha^j$.  Then $(\partial_i\colon i<m)$ is a
linearly independent $m$-tuple of commuting derivations on $L$; from this, we
obtain linearly independent extensions $\tilde D_i$ of the $D_i$ to $L$ such that
the brackets $[\tilde D_j,\tilde D_k]$ are the same linear combinations of the $\tilde D_i$ that the $[D_j,D_k]$ are of the $D_i$ (by \cite[Lem.~5.2, p.~937]{MR2000487}---or \cite[Lem.~2.1,
  p.~1930]{MR2286106}, by a different method---in characteristic $0$;
generally,  \cite[Lem.~2.4, p.~1334]{MR2114160}).  Then $(K,D_i,\dots,D_{m-1})\included(L,\tilde D_0,\dots,\tilde D_{m-1})$, and the latter is a model of $\DF^m$.  Moreover, $(L,\tilde D_0,\dots,\tilde D_{m-1})$ is an existentially closed model if and only if $(L,\partial_0,\dots,\partial_{m-1})$ is an existentially closed model of $\DF^m$; so a model-companion of $\DF^m$ can be derived from a model-companion of $\mDF$.
\end{proof}

That much stands, and differential forms
are convenient for establishing it.  The theorem, combined with the results of \S \ref{sect:resolution}, will yield a model-companion, $\DCF^m$, of $\DF^m$.

\subsection{A new approach}

In \cite{MR2000487} I tried also to obtain $\DCF_0^m$
independently as follows.  Suppose now we have
a separably closed field~$K$, along with a Lie-ring and
finite-dimensional space $\lrv$ of derivations 
of $K$; as a space, $\lrv$ has (for some $m$) a basis $(\partial_i\colon i<m)$, whose
dual is $(\dee t^i\colon i<m)$, so that the $\partial_i$ commute.  We
may assume  that
$(K,\partial_0,\dots,\partial_{m-1})$ is differentially perfect
\cite[Lem.~2.4]{MR2114160}.
Every system over $(K,\partial_0,\dots,\partial_{m-1})$
 is equivalent to a system of the
form of~\eqref{eqn:1-my}, generalized to
\begin{equation}\label{eqn:m-my}
  \bigwedge_ff=0\land
  \bigwedge_{j<k}\bigwedge_{i<m}\partial_ix^j=g_i^j.
\end{equation}
Here again $f\in K[x^0,\dots,x^{n-1}]$, and $g_i^j\in
K(x^0,\dots,x^{n-1})$. 
By means of~\eqref{eqn:dee}, we can also write the system as
\begin{equation}\label{eqn:many-my}
  \bigwedge_ff=0\land
  \bigwedge_{j<k}\dee x^j=\sum_{i<m}\dee t^i\cdot g_i^j.
\end{equation}
If $(a^0,\dots,a^{n-1})$ or $\tuple a$ is a solution (from some extension),
we may
assume that $(a^0,\dots,a^{k-1})$ is a
separating transcendence-basis of $K(a^0,\dots,a^{k-1})/K$.  However, we can no longer assume that $(a^0,\dots,a^{k-1})$ is a separating transcendence-basis of $K(\tuple a)/K$ itself.  In characteristic $0$, this is shown by the example referred to in \cite[Exple 1.2, p.~927]{MR2000487}; we can adapt the example to positive characteristic $p$ by letting $K=\mathbb F_p(b^{\sigma}\colon\sigma\in\vnn^2)$ and defining $\partial_0b^{(i,j)}=b^{(i+1,j)}$ and $\partial_1b^{(i,j)}=b^{(i,j+1)}$.  Let $(a^1,a^2)$ be algebraically independent over $K$, but $a^0=(a^2)^p$, and define $\partial_0a^0=0=\partial_1a^0$ and $\partial_0a^1=b^{(0,0)}$ and $\partial_1a^1=a^2$.  Then the $\partial_i$ are commuting derivations mapping $K(a^0,a^1)$ into $K(a^0,a^1,a^2)$, and they extend as commuting derivations to the latter field, but not so as to map this field into itself.
This is an important
difference from the case of one derivation; it is what causes the
difficulties in the case of several derivations.  

In our example, $K(a^0,a^1,a^2)/K(a^0,a^1)$ is not separable.
However, if we let $a^3$ be a new transcendental and define $\partial_0a^2=b^{(0,1)}$ and $\partial_1a^2=a^3$, then the $\partial_i$ are still commuting derivations, now mapping $K(a^0,a^1,a^2)$ into $K(a^0,a^1,a^2,a^3)$, and the larger field is indeed separable over the subfield.  This will turn out to be possible in general.  That is, in~\eqref{eqn:many-my}, we shall be able to assume that $(a^0,\dots,a^{k-1})$ is a
separating transcendence-basis of $K(a^0,\dots,a^{k-1})/K$ \emph{and} it extends to a separating transcendence-basis $(a^0,\dots,a^{\ell-1})$ of $K(\tuple a)/K$.  This is obvious in characteristic $0$.

The solution $\tuple a$
to~\eqref{eqn:many-my} then can be 
understood as follows.  First
we have the field $K(\tuple a)$, and then~\eqref{eqn:many-my} can be be
written as
\begin{equation}\label{eqn:step-1}
  \bigwedge_{j<k}\dee a^j=\sum_{i<m}\dee t^i\cdot g_i^j(\tuple a).
\end{equation}
A solution of this can be understood as a model
$(L,\tilde{\partial}_0,\dots,\tilde{\partial}_{m-1})$ of $\mDF$
extending $(K,\partial_0,\dots,\partial_{m-1})$ such that $K(\tuple
a)\included L$ and also~\eqref{eqn:step-1} holds when $\dee
a^j=\sum_{i<m}\dee t^i\cdot\tilde{\partial}_ia^j$.  This last condition is 
\begin{equation}\label{eqn:jkim}
\bigwedge_{j<k}\bigwedge_{i<m}\tilde{\partial}_ia^j=g_i^j(\tuple a).  
\end{equation}
Since the
$\tilde{\partial}_i$ commute, it is
necessary that 
\begin{equation}\label{eqn:jkhim}
\bigwedge_{j<k}\bigwedge_{i<m}\bigwedge_{h<i}\tilde{\partial}_h(g_i^j(\tuple
a))=\tilde{\partial}_i(g_h^j(\tuple a))
\end{equation}
\cite[\S 1, p.~926]{MR2000487}.
Any derivative with respect to $\tilde{\partial}_i$ of an element of
$K(\tuple a)$ is a constant plus a linear combination
of the derivatives  $\tilde{\partial}_ia^j$, where $j<\ell$ (by
\cite[Fact~1.1 (0, 2)]{MR2114160}, for example); 
we know
what these derivatives $\tilde{\partial}_ia^j$ are when $j<k$,
by~\eqref{eqn:jkim};
so~\eqref{eqn:jkhim} becomes a linear system over $K(\tuple a)$ in the
unknowns $\tilde{\partial}_ia^j$ where $k\leq j<\ell$.   

If $k=\ell$, then this linear system has no variables, so it is true
or false; its truth is a sufficient condition for~\eqref{eqn:step-1}
to have a solution.  If $k<\ell$, then the linear system is soluble or
not.  If it is soluble, then it is possible to extend the $\partial_i$
to derivations $\tilde{\partial}_i$ as required by~\eqref{eqn:jkim}
that commute on
$K(a^0,\dots,a^{k-1})$; but these derivations need not commute on all
of $K(\tuple a)$.  In \cite{MR2000487} I claimed that they could
commute, and that the solubility 
of~\eqref{eqn:jkhim} was sufficient for solubility
of~\eqref{eqn:step-1} in the sense above.  I was wrong.  

A generic solution to the linear system~\eqref{eqn:jkhim} generates an
extension of $K(\tuple a)$; then we have to check extensibility of the
commuting derivations to \emph{this.}  That is, we are back in the
same kind of situation we started with.  However, it turns out that
there is a bound on the number of times that we need to repeat this
process in order to ensure solubility of the original differential
system.  This is what is shown in \S \ref{sect:resolution} below;
differential forms are apparently not useful for this after all.  

\subsection{A counterexample}
Over a model of $\DF^2$,
let $(a,b,c)$ be an algebraically independent triple.  The counterexample supplied by Hrushovski is the system
\begin{align}\label{eqn:sys-ex}
  \dee a&=\dee t^0\cdot c^2+\dee t^1\cdot c,&
\dee b&=\dee t^0\cdot 2a+\dee t^1\cdot c
\end{align}
(where $c^2$ is the square of $c$; the constants
$(k,\ell)$ of \S \ref{subsect:fga} are now $(2,3)$).
Equivalently, by~\eqref{eqn:dee}, the system comprises the equations
\begin{align*}
  \partial_0a&= c^2,&\partial_1a&=c,&
  \partial_0b&=2a,&\partial_1b&=c.
  \end{align*}
From these, we compute
\begin{align*}
\partial_1\partial_0a&=2c\cdot\partial_1c,&
\partial_0\partial_1a&=\partial_0c,& 
\partial_1\partial_0b&=2\cdot\partial_1a=2c,&
\partial_0\partial_1b&=\partial_0c.
\end{align*}
Equating $\partial_0\partial_1$ and $\partial_1\partial_0$ yields the
linear system
\begin{align}\label{eqn:c}
  \partial_0c-2c\cdot\partial_1c&=0,&
  \partial_0c&=2c,
\end{align}
which has the solution $(\partial_0c,\partial_1c)=(2c,1)$.  But then we
must have
$\partial_1\partial_0c=2\cdot\partial_1c=2$, while
$\partial_0\partial_1c=\partial_01=0$, which means~\eqref{eqn:sys-ex}
has no solution, contrary to my claim in~\cite{MR2000487}.

For the record, the mistake is at the end of the proof of
\cite[Thm~5.7, p.~942]{MR2000487} and can be seen as follows.
Write the system~\eqref{eqn:sys-ex} as 
$\dee a=\alpha$, $\dee b=\beta$; then
\begin{align}\label{eqn:da}
  \dee\alpha
  &=\dee c\wedge(\dee t^0\cdot 2c+\dee t^1),
&&
\begin{split}
\dee\beta
&=\dee a\wedge\dee t^0\cdot 2+\dee c\wedge\dee t^1\\
&=(\dee c-\dee t^0\cdot 2c)\wedge\dee t^1.
\end{split}
\end{align}
Since also $\dee\beta=\dee^2b=0$,
we now have a condition on $\dee c\wedge\dee
t^1$, hence on $\partial_0c$; in particular, $\partial_0c=2c$, which
is what we found above.  But
there is no apparent
condition on $\partial_1c$, so I try introducing a new transcendental,
$d$, for this derivative.  By~\eqref{eqn:c} then,
\begin{equation*}
\dee c=\dee t^0\cdot 2c+\dee t^1\cdot d,
\end{equation*}  
which by~\eqref{eqn:da} yields
\begin{math}
  \dee\alpha
=\dee t^0\wedge\dee t^1\cdot 2c(1-d)
\end{math}.
But we must have $\dee\alpha=0$, so $d=1$, contrary to
assumption.  In short, the next to last sentence of the proof of
 \cite[Thm~5.7]{MR2000487} (beginning `This ideal is linearly disjoint
 from') is simply wrong.  (I had not attempted to argue that it was
 correct.)

\section{Resolution}\label{sect:resolution}

For a correct understanding of the existentially closed differential fields,
it is better not to introduce differential
forms from the beginning, but to allow equations to involve any number
of applications of the derivations.  In
contrast to~\ref{subsect:one-var} above, there does not seem to be an
advantage now in restricting attention to equations in one variable.

\subsection{Terminology}\label{subsect:terms}

I shall now avoid working
with differential polynomials as such, but shall work instead with the
algebraic dependencies that they determine.

Let $(K,\partial_0,\dots,\partial_{m-1})\models\mDF$.  Higher-order
derivatives with respect to the $\partial_i$ can be indexed by
elements of $\vnn^m$:  so, for
$\partial_0{}^{\sigma(0)}\dotsb\partial_{m-1}{}^{\sigma(m-1)}x$, we
may write \myboxed{\partial^{\sigma}x}.
Let \myboxed{\leq}{} be the product ordering of $\vnn^m$.
Then the derivative $\partial^{\sigma}x$ is
\defn{below}{} $\partial^{\tau}x$ (and the latter is \defn{above}{} the
former) if $\sigma\leq\tau$.  (In particular, a derivative is both below and above itself.)  If $n\in\vnn$, then two elements of
$\vnn^n\times n$ will be related by $\leq$ only if they agree in the
last coordinate, so that
\begin{equation*}
  (\sigma,k)\leq(\tau,\ell)\iff\sigma\leq\tau\land k=\ell;
\end{equation*}
we may use the corresponding terminology of `above' and `below', so that
$\partial^{\sigma}x_k$ is below~$\partial^{\tau}x_{\ell}$ if (and only if)
$(\sigma,k)\leq(\tau,\ell)$. 

If
$\sigma\in\vnn^m$, let the sum $\sum_{i<m}\sigma(i)$ be denoted by
\myboxed{\size{\sigma}}: this is the \defn{height}{} of $\sigma$ or of
$\partial^{\sigma}x$.  (Kolchin \cite[\S I.1, p.~59]{MR58:27929} uses
the word \emph{order}.) 
If $n$ is a positive integer, let $\vnn^m\times n$ be (totally) ordered by
\myboxed{\tleq}, which is taken from 
the left lexicographic ordering 
of $\vnn^{m+1}$ by means of the embedding
\begin{equation*}
  (\xi,k)\longmapsto
(\size{\xi},k,\xi(0),\dots,\xi(m-2))
\end{equation*}  
of $\vnn^m\times n$ in $\vnn^{m+1}$.  
Then $(\vnn^m\times n,\tleq)$ is isomorphic to $(\vnn,\leq)$.  
We may
write $(\sigma,k)\tl\infty$ for all $(\sigma,k)$ in $\vnn^m\times n$.
Suppose $(x_h\colon h<n)$ is a tuple of indeterminates.  By ordering
the formal derivatives $\partial^{\sigma}x_k$ in terms of $(\sigma,k)$ and
$\tleq$, we have Kolchin's example of an \emph{orderly ranking} of
derivatives \cite[\S I.8, p.~75]{MR58:27929}.  If
$(\sigma,k)\tl(\tau,\ell)$, I shall say that the derivative
$\partial^{\sigma}x_k$ is
\defn{less}{} than $\partial^{\tau}x_{\ell}$ or is a \defn{predecessor}{}
of $\partial^{\tau}x_{\ell}$, and
 $\partial^{\tau}x_{\ell}$ is \defn{greater}{} than 
 $\partial^{\sigma}x_k$; likewise for the expressions $a_k^{\sigma}$
and $a_{\ell}^{\tau}$, introduced in~\eqref{eqn:L} below.  (So, the
terms just defined refer
to the strict total ordering $\tl$, while `below' and `above' refer to the
partial ordering $\leq$.)

Addition and subtraction on $\vnn$ induce corresponding operations on
$\vnn^m$.  Then 
\begin{gather}\notag
\tau\leq\sigma+\tau,\\\notag
\partial^{\sigma}\partial^{\tau}x_k=
\partial^{\sigma+\tau}x_k,\\\label{eqn:sks}
(\sigma,k)\tleq(\sigma+\tau,k),\\\notag
(\sigma,k)\tleq(\tau,\ell)\Iff 
(\sigma+\rho,k)\tleq(\tau+\rho,\ell).
\end{gather}
If $i<m$, let \myboxed{\ichar}{} denote the
characteristic function of $\{i\}$ in $\vnn^m$, so that
$\partial^{\ichar}=\partial_i$, and more generally
$\partial_i\partial^{\sigma}=\partial^{\sigma+\ichar}$, and also
$\partial_i\partial^{\sigma-\ichar}=\partial^{\sigma}$ if $\sigma(i)>0$. 

Let $L$ be an extension of $K$ with generators that are indexed by an initial
segment of $(\vnn^m\times n,\tleq)$; that is,
\begin{equation}\label{eqn:L}
  L=K(a_h^{\xi}\colon(\xi,h)\tl(\tau,\ell)),
\end{equation}
where $(\tau,\ell)\in\vnn^m\times n$, or possibly $(\tau,\ell)=\infty$,
in which case $L=K(a_h^{\xi}\colon(\xi,h)\in\vnn^m\times n)$.
It could happen that, in the generating tuple $(a_h^{\xi}\colon(\xi,h)\tl(\tau,\ell))$ of $L/K$, the same element of $L$ may appear twice, with different indices.  In this case, when
writing $a_h^{\xi}$, we may mean not just a particular element of $L$, but that element together with the pair $(\xi,h)$ of indices.
For example, by~\eqref{eqn:sks}, if $(\sigma+\ichar,k)\tl(\tau,\ell)$, then $(\sigma,k)\tl(\tau,\ell)$; hence we may say that,
if $a_k^{\sigma+\ichar}$ is one of the generators
of $L/K$, then so is $a_k^{\sigma}$.  Let us say that $L$, with the
tuple of generators given in~\eqref{eqn:L}, meets the \defn{differential condition}{} if there is
no obstacle to extending each derivation $\partial_i$ to a derivation
$D_i$ on $K(a_h^{\xi}\colon(\xi+\ichar,h)\tl(\tau,\ell))$ such that
\begin{equation}\label{eqn:Dia}
  D_ia_k^{\sigma}=a_k^{\sigma+\ichar}
\end{equation}
whenever $(\sigma+\ichar,k)\tl(\tau,\ell)$.
(If the right-hand member of~\eqref{eqn:Dia} is not defined,
then the left need not be defined.)  
To be precise,
if $f$ is a rational function over $K$
in variables $(x_h^{\xi}\colon(\xi,h)\tleq(\sigma,k))$ for some $(\sigma,k)$ in $\vnn^m\times n$, and $D$ is a derivation of $K$, then $f$ has a derivative $Df$, which is the linear function over $K(x_h^{\xi}\colon(\xi,h)\tleq(\sigma,k))$ given by
\begin{equation*}
Df=  \sum_{(\xi,h)\tleq(\sigma,k)}
\frac{\partial f}{\partial
  x_h^{\xi}}\cdot
y_h^{\xi}+ 
f^D.
\end{equation*}
Then the differential condition is that for all such $f$, if
$(\sigma+\ichar,k)\tl(\tau,\ell)$ for some $i$ in $m$, and if
\begin{equation}\label{eqn:fahx}
  f(a_h^{\xi}\colon(\xi,h)\tleq(\sigma,k))=0,
\end{equation}
then
\begin{math}
\partial_if(a_h^{\xi},a_h^{\xi+\ichar}\colon(\xi,h)\tleq(\sigma,k))=0
\end{math}, that is,
\begin{equation}\label{eqn:sumeta}
  \sum_{(\eta,g)\tleq(\sigma,k)}
\frac{\partial f}{\partial
  x_g^{\eta}}(a_h^{\xi}\colon(\xi,h)\tleq(\sigma,k))\cdot
a_g^{\eta+\ichar}+ 
f^{\partial_i}(a_h^{\xi}\colon(\xi,h)\tleq(\sigma,k))=0.
\end{equation}
(Note well the assumption that
 $(\sigma+\ichar,k)\tl(\tau,\ell)$.  In~\eqref{eqn:sumeta}, each
of the $a_g^{\eta+\ichar}$ must exist, even though the
coefficient $(\partial f/\partial
 x_g^{\eta})(a_h^{\xi}\colon(\xi,h)\tleq(\sigma,k))$ might be $0$.)
So the differential condition is \emph{necessary} for the
extensibility of the $\partial_i$ as desired (see for example
\cite[Fact~1.1~(0)]{MR2114160}); sufficiency is part of
Lemma~\ref{lem:above} below.

An extension $(M,D_0,\dots,D_{m-1})$ of
$(K,\partial_0,\dots,\partial_{m-1})$ is \defn{compatible}{} with the
extension $L$ of $K$ given in~\eqref{eqn:L} if $L\included M$,
and~\eqref{eqn:Dia} holds whenever
$(\sigma+\ichar,k)\tl(\tau,\ell)$. 

Borrowing some terminology
used for differential polynomials \cite[\S IX.1, p.~163]{MR0201431},
let us say that a generator $a_k^{\sigma}$ of $L/K$ is a \defn{leader}{} if
it is algebraically dependent over $K$ on its predecessors, that is, 
\begin{equation*}
a_k^{\sigma}\in\alg{K(a_h^{\xi}\colon(\xi,h)\tl(\sigma,k))}.  
\end{equation*}
Then
$a_k^{\sigma}$ is a \defn{separable}{} leader if it is separably
algebraic over $K(a_h^{\xi}\colon(\xi,h)\tl(\sigma,k))$; otherwise, it
is an \defn{inseparable}{} leader.  A separable
leader $a_k^{\sigma}$ is \defn{minimal}{} if
there is no separable leader strictly below it---no separable leader
$a_k^{\rho}$ such that $\rho<\sigma$.

\begin{lemma}\label{lem:above}
Suppose $(K,\partial_0,\dots,\partial_{m-1})\models\mDF$ and $L$ meets the differential condition, where $L$ is
an extension
$K(a_h^{\xi}\colon(\xi,h)\tl(\tau,\ell))$ of $K$.  Then the derivations
$\partial_i$ extend to derivations $D_i$ from
$K(a_h^{\xi}\colon(\xi+\ichar,h)\tl(\tau,\ell))$ into $L$
 such that~\eqref{eqn:Dia} holds when $(\sigma+\ichar,k)\tl(\tau,\ell)$.
 If $a_k^{\sigma}$ is a
 separable leader, and $(\sigma+\ichar,k)\tl(\tau,\ell)$, then
 \begin{equation}\label{eqn:aksi}
   a_k^{\sigma+\ichar}\in K(a_h^{\xi}\colon (\xi,h)\tl(\sigma+\ichar,k))
 \end{equation}
(that is, $a_k^{\sigma+\ichar}$ is a rational function over $K$ of its
 predecessors);
in particular, $a_k^{\sigma+\ichar}$ is a separable leader.  Therefore
generators of $L/K$ that are above separable leaders are themselves
separable leaders.
\end{lemma}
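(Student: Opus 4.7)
The plan is to argue the three assertions in turn, with most of the work falling in the first.

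For the extension claim, enumerate the generators $a_h^\xi$ of $L/K$ as $b_0,b_1,b_2,\dots$ according to $\tleq$, and build $D_i$ by extending $\partial_i$ along the chain $K=F_0\included F_1\included\dotsb$, where $F_{n+1}=F_n(b_n)$. When $b_n=a_k^\sigma$ with $(\sigma+\ichar,k)\tl(\tau,\ell)$, we are obliged to set $D_ib_n=a_k^{\sigma+\ichar}$; this element already lives in $L$. If $b_n$ is transcendental over $F_n$, the choice is free and consistent. Otherwise $b_n$ has a minimal polynomial $p(x)\in F_n[x]$, and the consistency of the extension reduces to
\begin{equation*}
p'(b_n)\cdot a_k^{\sigma+\ichar}+p^{D_i}(b_n)=0,
\end{equation*}
where $p^{D_i}$ is obtained by applying $D_i$ to each coefficient of $p$. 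After clearing denominators, $p(b_n)=0$ is a polynomial relation of the form~\eqref{eqn:fahx}, so the differential condition~\eqref{eqn:sumeta} applies; isolating the summand with $(\eta,g)=(\sigma,k)$ and recognising the remaining summands (by the chain rule) as the contributions to $p^{D_i}(b_n)$ yields exactly the displayed identity.

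For assertion~(2), let $g(x)$ be the separable minimal polynomial of the separable leader $a_k^\sigma$ over $K(a_h^\xi\colon(\xi,h)\tl(\sigma,k))$, so $g(a_k^\sigma)=0$ and $g'(a_k^\sigma)\neq 0$. By~\eqref{eqn:sks}, whenever $(\xi,h)\tl(\sigma,k)$ we have $(\xi+\ichar,h)\tl(\sigma+\ichar,k)\tl(\tau,\ell)$, so assertion~(1) has extended $D_i$ to every predecessor of $(\sigma,k)$ and to $a_k^\sigma$ itself. Applying $D_i$ to $g(a_k^\sigma)=0$ gives
\begin{equation*}
g'(a_k^\sigma)\cdot a_k^{\sigma+\ichar}+g^{D_i}(a_k^\sigma)=0,
\end{equation*}
and separability lets us solve
\begin{equation*}
a_k^{\sigma+\ichar}=-\,g^{D_i}(a_k^\sigma)/g'(a_k^\sigma).
\end{equation*}
The right-hand side is a rational function over $K$ in $a_k^\sigma$ and in the $D_ia_h^\xi=a_h^{\xi+\ichar}$ with $(\xi,h)\tl(\sigma,k)$; all of these are predecessors of $a_k^{\sigma+\ichar}$ by~\eqref{eqn:sks}, establishing~\eqref{eqn:aksi}. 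The linear (hence separable) equation so obtained then makes $a_k^{\sigma+\ichar}$ a separable leader.

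For assertion~(3), fix a separable leader $a_k^\sigma$ and a generator $a_k^\rho$ with $\sigma\leq\rho$. I induct on $\size{\rho}-\size{\sigma}$. If $\rho=\sigma$ there is nothing to prove; otherwise pick $i$ with $\rho(i)>\sigma(i)$ and set $\rho'=\rho-\ichar$, so $\sigma\leq\rho'$ and $\size{\rho'}<\size{\rho}$. Since $(\rho',k)\tleq(\rho,k)\tl(\tau,\ell)$, the generator $a_k^{\rho'}$ lies in $L/K$ and is a separable leader by inductive hypothesis; then $(\rho'+\ichar,k)=(\rho,k)\tl(\tau,\ell)$ lets assertion~(2) conclude that $a_k^\rho$ is a separable leader too. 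The only real obstacle I foresee is in assertion~(1): matching the differential-condition identity~\eqref{eqn:sumeta}, which treats all generators as independent variables, with the classical consistency relation $p'(b_n)D_ib_n+p^{D_i}(b_n)=0$ requires a careful but routine chain-rule bookkeeping through the substitution that exhibits $p(b_n)=0$ as a polynomial relation over $K$.
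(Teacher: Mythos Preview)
Your proof is correct and follows essentially the same approach as the paper's. The paper's proof is much terser---it simply asserts that~\eqref{eqn:Dia} defines $D_i$ on the generators and that the differential condition makes the extension to the whole field unambiguous, then observes that~\eqref{eqn:sumeta} can be solved for $a_k^{\sigma+\ichar}$ when $a_k^{\sigma}$ is a separable leader, and finally iterates---but your step-by-step extension along the $\tleq$-chain and your height-difference induction for the final claim are just explicit unpackings of those same moves.
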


\begin{proof}
The claim follows from the basic properties of derivations, such as
are gathered in \cite[\S 1]{MR2114160}.
We extend the derivations to each generator in turn, according to the ordering $\tl$.
Suppose
$D_i$ has been defined as desired on $K(a_h^{\xi}\colon(\xi,h)\tl(k,\sigma))$ (so that~\eqref{eqn:Dia} holds
whenever it applies).
If $a_k^{\sigma}$ is not a leader, then we are free to define the derivatives $D_ia_k^{\sigma}$ as we like. Now suppose
$a_k^{\sigma}$ is a separable leader, and
$(\sigma+\ichar,k)\tl(\tau,\ell)$.  Then $D_ia_k^{\sigma}$ is
obtained by differentiating the minimal polynomial of $a_k^{\sigma}$
over $K(B)$.  That is, $D_ia_k^{\sigma}$ is obtained by
differentiating an equation like~\eqref{eqn:fahx}; by the differential
condition, $D_ia_k^{\sigma}$ must be $a_k^{\sigma+\ichar}$ as given
by~\eqref{eqn:sumeta}; this shows that $a_k^{\sigma+\ichar}$ is a
rational function over $K$ of its predecessors.  

Finally, in a
positive characteristic~$p$,
$a_k^{\sigma}$ may be an inseparable leader.  Then $(a_k^{\sigma})^{p^r}\in \sep{K(a_h^{\xi}\colon
  (\xi,h)\tl(\sigma,k))}$ for some positive $r$.  If
$(\sigma+\ichar,k)\tl(\tau,\ell)$, then we are free to define
$D_ia_k^{\sigma}$ as $a_k^{\sigma+\ichar}$, provided
$D_i((a_k^{\sigma})^{p^r})=0$.  But again this condition is ensured by
the differential condition.  Indeed, we may suppose~\eqref{eqn:fahx}
shows the separable dependence of $(a_k^{\sigma})^{p^r}$ over the
predecessors of $a_k^{\sigma}$.  
That is, we can understand $f(a_h^{\xi}\colon(\xi,h)\tleq(\sigma,k))$
as $g((a_k^{\sigma})^{p^r})$ for some separable polynomial $g$ over
$K(a_h^{\xi}\colon(\xi,h)\tl(\sigma,k))$. 
Then $D_i((a_k^{\sigma})^{p^r})$ is
obtained from~\eqref{eqn:sumeta}, provided we replace the term
$(\partial f/\partial
x_k^{\sigma})(a_h^{\xi}\colon(\xi,h)\tleq(\sigma,k))\cdot
a_k^{\sigma+\ichar}$ with
$g'((a_k^{\sigma})^{p^r})\cdot
D_i((a_k^{\sigma})^{p^r})$.  But in the present case, the former term
is $0$.  Since,
after the replacement, the resulting 
equation still holds, we must have $D_i((a_k^{\sigma})^{p^r})=0$.
\end{proof}

\subsection{A solubility condition}

If $(K,\partial_0,\dots,\partial_{m-1})\models\mDF$, then this model
has an extension whose underlying field is the separable closure of
$K$ (as by \cite[Lem.~3.4, p.~930]{MR2000487} and \cite[Lem.~2.4,
  p.~1334]{MR2114160}).  We shall need this in a more general form:

\begin{lemma}\label{lem:comm}
  Suppose a field $M$ has two subfields $L_0$ and $L_1$, which in turn
  have a common subfield $K$.  For each $i$ in $2$, suppose there is a
  derivation $D_i$ mapping $K$ into $L_i$ and $L_{1-i}$ into $M$.
  Then the bracket $[D_0,D_1]$ is a well-defined
  derivation on $K$.  Suppose it is the $0$-derivation.
Suppose also that $a$ is an element of $M$ that is separably algebraic over
$K$.  Then each $D_i$ extends uniquely to $K(a)$, and $D_ia\in
L_{1-i}(a)$, so $D_{1-i}D_ia$ is also well-defined.  Moreover, $[D_0,D_1]a=0$.
\end{lemma}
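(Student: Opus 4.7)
The plan is to use the separability of $a$ over $K$ to extend each $D_i$ uniquely to $K(a)$, to check that the extended value $D_ia$ lands in a field where $D_{1-i}$ admits its own separable extension, and then to derive $[D_0,D_1]a=0$ by applying both $D_0D_1$ and $D_1D_0$ to the defining relation $f(a)=0$, where $f$ is the minimal polynomial of $a$ over $K$.

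For the extensions, let $f(x)\in K[x]$ be the minimal polynomial of $a$, so $f'(a)\neq 0$ by separability. The usual rule extends each $D_i|_K$ uniquely to $K(a)$ via
\[
D_ia=-\frac{f^{D_i}(a)}{f'(a)},
\]
where $f^{D_i}$ denotes the polynomial obtained by applying $D_i$ to each coefficient of $f$. Reading off the commutative square, $D_i$ sends $K$ into $L_{1-i}$, so $f^{D_i}\in L_{1-i}[x]$ and therefore $D_ia\in L_{1-i}(a)$. Since $a$ is separable over $L_{1-i}$ (its minimal polynomial over $L_{1-i}$ divides $f$), the given derivation $D_{1-i}\colon L_{1-i}\to M$ extends uniquely to a derivation $L_{1-i}(a)\to M$, and by uniqueness this extension agrees on $K(a)$ with the one constructed above. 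Hence $D_{1-i}D_ia$ is well-defined in $M$.

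For the bracket, differentiate $f(a)=0$ once with $D_1$ and then with $D_0$, using the product and chain rules together with the identity $(f^D)'=(f')^D$:
\begin{align*}
0=D_1f(a)&=f^{D_1}(a)+f'(a)\cdot D_1a,\\
0=D_0D_1f(a)&=f^{D_0D_1}(a)+(f')^{D_1}(a)\cdot D_0a+(f')^{D_0}(a)\cdot D_1a\\
&\qquad{}+f''(a)\cdot D_0a\cdot D_1a+f'(a)\cdot D_0D_1a.
\end{align*}
The expansion of $D_1D_0f(a)=0$ has the same shape, with $D_0$ and $D_1$ interchanged throughout. Upon subtraction the three middle cross-terms cancel pairwise (using commutativity of multiplication in $M$), leaving
\[
f^{[D_0,D_1]}(a)+f'(a)\cdot[D_0,D_1]a=0.
\]
Because $[D_0,D_1]$ vanishes on $K$ by hypothesis, the polynomial $f^{[D_0,D_1]}$ is identically zero, and $f'(a)\neq 0$ forces $[D_0,D_1]a=0$.

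The main effort is the bookkeeping in the subtraction of the two second-derivative expansions: verifying that the middle cross-terms really do match up in symmetric pairs and cancel, with the $f^{[D_0,D_1]}$ term absorbing everything that survives apart from the desired $f'(a)\cdot[D_0,D_1]a$. The only other substantive input is the uniqueness of separable extensions of derivations, applied both over $K$ and over each $L_{1-i}$.
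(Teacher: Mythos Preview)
Your proof is correct. It is essentially the explicit, computational version of the paper's argument: the paper observes abstractly that once the compositions $D_0D_1$ and $D_1D_0$ are seen to be defined on $K(a)$, the bracket $[D_0,D_1]$ is a derivation there which vanishes on $K$, and then invokes the standard fact that a derivation vanishing on $K$ vanishes on all of $\sep K$. You instead unpack that fact by writing down the minimal polynomial $f$, differentiating $f(a)=0$ twice, and cancelling the symmetric cross-terms by hand. Both routes rest on the same ingredient---uniqueness of the extension of a derivation through a separable algebraic extension---so the difference is one of presentation rather than strategy; your version has the advantage of being self-contained, while the paper's is shorter because it cites the general principle.

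One minor remark: you read the diagram (correctly, and consistently with the conclusion $D_ia\in L_{1-i}(a)$) as saying $D_i$ maps $K$ into $L_{1-i}$, whereas the prose of the lemma statement says $D_i$ maps $K$ into $L_i$; this is a labelling inconsistency in the paper itself, not an error in your argument.
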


\begin{proof}
Obvious from \cite[Fact~1.1~(2)]{MR2114160}.
\end{proof}

In a positive characteristic, the possibility of inseparably algebraic
extensions presents a challenge, which however is handled by the following.

\begin{theorem}\label{thm:first}
Suppose $(K,\partial_0,\dots,\partial_{m-1})\models\mDF$, and $K$
has an extension $K(a_h^{\xi}\colon\size{\xi}\leq2r\land
h<n)$ meeting the differential condition for some positive integers
$r$ and $n$.  Suppose further that, whenever $a_k^{\sigma}$ is a
minimal separable leader, 
then $\size{\sigma}\leq r$.  Then  $(K,\partial_0,\dots,\partial_{m-1})$
has an extension 
$(M,D_0,\dots,D_{m-1})$
compatible with
$K(a_h^{\xi}\colon\size{\xi}<2r\land h<n)$.  
\end{theorem}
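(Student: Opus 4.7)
The plan is to enlarge the given field $L_0 = K(a_h^\xi : \size{\xi} \leq 2r \land h < n)$ to a full differential extension of $(K, \partial_0, \dots, \partial_{m-1})$ by adjoining, one at a time in $\tleq$-order, the missing generators $a_k^\sigma$ with $\size{\sigma} > 2r$, and then to invoke Lemma~\ref{lem:above} on the resulting field $M$. Compatibility with $K(a_h^\xi : \size{\xi} < 2r \land h < n)$ will be built into the construction.

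Lemma~\ref{lem:above} applied to $L_0$ already gives derivations $D_i$ from $K(a_h^\xi : \size{\xi} < 2r \land h < n)$ into $L_0$ with $D_i a_k^\sigma = a_k^{\sigma + \ichar}$ whenever $\size{\sigma + \ichar} \leq 2r$. To extend past height $2r$, I distinguish two cases for each new pair $(\sigma, k)$. If $\sigma$ lies above some minimal separable leader position $\rho$ for the index $k$ (hence $\size{\rho} \leq r$ by hypothesis), then the defining relation $f(a_h^\xi : (\xi, h) \tleq (\rho, k)) = 0$ of that leader can be formally differentiated repeatedly as in~\eqref{eqn:sumeta} and solved inductively (as in the proof of Lemma~\ref{lem:above}) for $a_k^{\rho + \ichar}, a_k^{\rho + \ichar + \jchar}, \dots, a_k^\sigma$, yielding a rational function of strictly earlier generators; $a_k^\sigma$ is then adjoined via this formula. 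Otherwise $a_k^\sigma$ is adjoined as a fresh transcendental. Taking the union of the resulting tower produces $M$.

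The main obstacle is well-definedness of the rational formula in the first case: a single position $(\sigma, k)$ above a minimal separable leader can be reached from that leader by many sequences of unit-vector increments, and it can lie above more than one minimal separable leader, so all resulting formulas for $a_k^\sigma$ must coincide. The doubling of the height bound in the hypothesis is exactly what gives the necessary slack: verifying that two formal differentiations commute when applied at a height-$r$ leader plays out at height at most $r + 2$, within the range $\size{\xi} \leq 2r$ where the differential condition is already assumed. Consistency is then propagated past height $2r$ by induction using Lemma~\ref{lem:comm}: each adjoined $a_k^\sigma$ in the determined case is separably algebraic over the preceding field, so the previously-established commutativity $[D_i, D_j] = 0$ transfers across this separable extension. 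A parallel induction shows that no new minimal separable leader ever appears beyond height $r$, so the dichotomy between determined and free generators is well-defined throughout the construction.

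With the differential condition now verified on all of $M$, Lemma~\ref{lem:above} provides commuting derivations $D_i$ on $M$ extending the $\partial_i$ and satisfying $D_i a_k^\sigma = a_k^{\sigma + \ichar}$ for every appropriate $(\sigma, k)$. In particular $(M, D_0, \dots, D_{m-1})$ is a model of $\mDF$ extending $(K, \partial_0, \dots, \partial_{m-1})$ and compatible with $K(a_h^\xi : \size{\xi} < 2r \land h < n)$, as required.
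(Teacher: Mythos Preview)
Your overall plan---adjoin generators in $\tleq$-order past height $2r$, making each new $a_\ell^\tau$ a fresh transcendental when no $a_\ell^{\tau-\ichar}$ is a leader, and otherwise the forced value $D_i a_\ell^{\tau-\ichar}$---matches the paper. The gap is in your consistency check when both $a_\ell^{\tau-\ichar}$ and $a_\ell^{\tau-\jchar}$ are separable leaders: this requires $[D_i,D_j]\,a_\ell^{\tau-\ichar-\jchar}=0$. You invoke Lemma~\ref{lem:comm}, but that lemma applies only if $a_\ell^{\tau-\ichar-\jchar}$ is itself separably algebraic over its predecessors, and nothing in your argument guarantees this. It can fail for $m\geq3$: with $r=1$, $i=0$, $j=1$, minimal separable leaders at $\pi=(0,1,0)$ and $\rho=(1,0,0)$, and $\tau=(1,1,1)$, one has $\tau-\ichar\geq\pi$ and $\tau-\jchar\geq\rho$, yet $\tau-\ichar-\jchar=(0,0,1)$ lies above neither, so $a_\ell^{\tau-\ichar-\jchar}$ is a free transcendental and Lemma~\ref{lem:comm} is unavailable. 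Your remark that the check ``plays out at height at most $r+2$'' mislocates the role of the bound (and is already off for $r=1$, where $r+2>2r$).

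The paper supplies the missing combinatorial step. With $\pi,\rho$ the minimal separable leaders below $a_\ell^{\tau-\ichar}$ and $a_\ell^{\tau-\jchar}$, their least upper bound $\nu$ satisfies $\size\nu\leq\size\pi+\size\rho\leq 2r<\size\tau$, so $\nu\leq\tau-\kchar$ for some $k$. If $k\in\{i,j\}$, then one of $\pi,\rho$ already lies below $\tau-\ichar-\jchar$ and Lemma~\ref{lem:comm} applies directly; otherwise one detours through $a_\ell^{\tau-\kchar}$: since $\pi\leq\tau-\ichar-\kchar$ and $\rho\leq\tau-\jchar-\kchar$, both of those are separable leaders, and two applications of Lemma~\ref{lem:comm} give $D_i a_\ell^{\tau-\ichar}=D_k a_\ell^{\tau-\kchar}=D_j a_\ell^{\tau-\jchar}$. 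This inequality $\size\nu\leq 2r$ is the true use of the doubling. A secondary point: in positive characteristic the given generators at height exactly $2r$ could include inseparable leaders, which would break the ``separable leader or non-leader'' dichotomy for $a_\ell^{\tau-\ichar}$ at the first new step; the paper avoids this by first replacing the height-$2r$ tuple with a \emph{generic} solution of the linear system it must satisfy, which is harmless since compatibility is only asserted below height $2r$.
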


\begin{proof}
  The claim can be compared to and perhaps derived from a
  differential-alge\-braic lemma of 
  Rosenfeld \cite[\S I.2]{MR0107642}, at least in
  characteristic~$0$.  Here I give an independent 
  argument, for arbitrary characteristic.  
We shall obtain $M$ recursively as
  $K(a_h^{\xi}\colon(\xi,h)\in\vnn^m\times n)$, at the same time
  proving inductively that the $\partial_i$ can be extended to $D_i$ so
  that~\eqref{eqn:Dia} holds in all cases.

Let $L=K(a_h^{\xi}\colon\size{\xi}<2r\land h<n)$; this is
$K(a_h^{\xi}\colon(\xi,h)\tleq((2r-1,0,\dots,0),n-1))$.  Then
by~\eqref{eqn:sumeta}, the differential
condition requires of the tuple 
$(a_h^{\xi}\colon\size{\xi}=2r\land h<n)$ only that it solve
some linear equations over $L$.  The hypothesis of our claim is that
there \emph{is} a solution, namely
 $(a_h^{\xi}\colon\size{\xi}=2r\land h<n)$.  We may therefore
assume that this tuple is a
\emph{generic} solution of these equations.  In particular, no entry
of this tuple is an inseparable leader.  (If, instead of being chosen generically, the entries of
$(a_h^{\xi}\colon\size{\xi}=2r\land h<n)$ were chosen from the field $L$,
then this field
would be closed under the desired extensions $D_i$ of $\partial_i$,
and the derivations $D_i$ would commute on the subfield
$K(a_h^{\xi}\colon\size{\xi}+1<2r\land h<n)$; but they might not
commute on all of $L$.) 

Now, as an inductive hypothesis, suppose we have the extension
$K(a_h^{\xi}\colon(\xi,h)\tl(\tau,{\ell}))$ of $K$ meeting the differential
condition, so that there are derivations $D_i$ as given by
Lemma~\ref{lem:above}; suppose also that
\begin{enumerate}
\item
if $a_k^{\sigma}$ is a minimal separable leader, then
$\size{\sigma}\leq r$;
\item
if $a_k^{\sigma}$ is an inseparable leader, then
$\size{\sigma}<2r$.
\end{enumerate}
We need
to choose $a_{\ell}^{\tau}$ in such a way that these conditions still
hold for $K(a_h^{\xi}\colon(\xi,h)\tleq(\tau,{\ell}))$.  
The inductive
hypothesis is correct when $\size{\tau}\leq2r$, and then the desired
conclusion follows; so we may
assume $\size{\tau}>2r$.  
Hence, if $\tau(i)>0$, so that $\tau-\ichar$ is defined, then
$\size{\tau-\ichar}\geq2r$, so
$a_{\ell}^{\tau-\ichar}$ is not an inseparable leader.  

If $a_{\ell}^{\tau-\ichar}$ is not a
leader at all, for any $i$ in $m$, then we may let $a_{\ell}^{\tau}$ be a
new transcendental, and we may define each derivative
$D_ia_{\ell}^{\tau-\ichar}$ as this \cite[Fact~1.1~(1)]{MR2114160}.  

In the other case, $a_{\ell}^{\tau-\ichar}$ is a
separable leader for some $i$.  Then $D_ia_{\ell}^{\tau-\ichar}$ is determined
(Lemma~\ref{lem:above}).  We want to let $a_{\ell}^{\tau}$ be this
derivative.  However, possibly also $a_{\ell}^{\tau-\jchar}$ is a
separable leader, where $i\neq j$.  In this case, we must check that
\begin{equation}\label{eqn:comm}
  D_ja_{\ell}^{\tau-\jchar}=D_ia_{\ell}^{\tau-\ichar},
\end{equation}
that is, $[D_i,D_j]a_{\ell}^{\tau-\ichar-\jchar}=0$. 

There are minimal separable leaders $a_{\ell}^{\pi}$ and $a_{\ell}^{\rho}$
below $a_{\ell}^{\tau-\ichar}$ and
$a_{\ell}^{\tau-\jchar}$ respectively.
Let $\nu$ be $\pi\vee\rho$,
the least upper bound of $\{\pi,\rho\}$ with respect to $\leq$.
Then $\nu\leq\tau$.  But
$\size{\nu}\leq\size{\pi}+\size{\rho}\leq2r<\size{\tau}$; so $\nu<\tau$. 
Hence $\nu\leq\tau-\kchar$ for some $k$ in $m$, which means
$a_{\ell}^{\nu}$ is below $a_{\ell}^{\tau-\kchar}$.
Consequently,
\begin{enumerate}
  \item
$a_{\ell}^{\pi}$ is below both $a_{\ell}^{\tau-\ichar}$ and
    $a_{\ell}^{\tau-\kchar}$;
\item
$a_{\ell}^{\rho}$ is below both $a_{\ell}^{\tau-\jchar}$ and
    $a_{\ell}^{\tau-\kchar}$.
\end{enumerate}
If $k=j$, then $a_{\ell}^{\pi}$ is below
$a_{\ell}^{\tau-\ichar-\jchar}$, so this is a separable leader.  As
$D_i$ and $D_j$ commute on
$K(a_h^{\xi}\colon (\xi,h)\tl(\tau-\ichar-\jchar,{\ell}))$ by the 
differential condition, they must
commute also at $a_{\ell}^{\tau-\ichar-\jchar}$ (Lemma~\ref{lem:comm}),
so~\eqref{eqn:comm} is established.  The argument is the same if
$k=i$.  If $k$ is different from $i$ and $j$, then again the same
argument yields
 $D_ja_{\ell}^{\tau-\jchar}=D_ka_{\ell}^{\tau-\kchar}$ and
 $D_ka_{\ell}^{\tau-\kchar}=D_ia_{\ell}^{\tau-\ichar}$,
so~\eqref{eqn:comm} holds.   

In no case did we introduce a new minimal separable leader or an
  inseparable leader.  This completes the induction and the
  proof.  
\end{proof}


In terms of differential polynomials and ideals, the theorem can be
understood as follows.  Given the hypothesis of the theorem, let $S$
be the set of differential polynomials
$f(\partial^{\xi}x_h\colon\size{\xi}<2r\land h<n)$, where $f$
ranges over the ordinary polynomials over $K$ such that
$f(a^{\xi}_h\colon\size{\xi}<2r\land h<n)=0$.  Then $S$
includes a characteristic set for the differential ideal that it
generates.  

We can now characterize the existentially closed models of $\mDF$ by
means of the following lemma.  The lemma follows from unproved
statements in \cite[\S 0.17, p.~49]{MR58:27929}; let's just prove it here.  

\begin{lemma}\label{lem:fin}
For every $m$ in $\vnn$ and positive integer $n$, 
  every antichain of $(\vnn^m\times n,\leq)$ is finite.
\end{lemma}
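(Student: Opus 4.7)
The plan is to prove the (equivalent and slightly more convenient) fact that $(\vnn^m\times n,\leq)$ is a well-quasi-order, in the sense that every infinite sequence in it contains a weakly increasing pair (indeed, an infinite weakly increasing subsequence). This immediately gives the lemma, since an infinite antichain, viewed as an infinite sequence, contains no two comparable elements.

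First I would reduce to the case where the second factor is trivial. Given any infinite sequence in $\vnn^m\times n$, the pigeonhole principle (applied to the finite set $n$) produces an infinite subsequence on which the second coordinate is constant; so it is enough to show that every infinite sequence in $(\vnn^m,\leq)$ has an infinite weakly increasing subsequence. This I would prove by induction on $m$. The base case $m=0$ is trivial, as $\vnn^0$ has just one element. The case $m=1$ is the familiar fact that every infinite sequence of natural numbers has an infinite weakly increasing subsequence (proved, e.g., by the peak-or-no-peak dichotomy: either infinitely many terms are peaks, giving an infinite strictly decreasing subsequence, which must be finite, contradiction; or only finitely many are peaks, from which one extracts a weakly increasing subsequence starting after the last peak).

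For the inductive step, suppose the claim holds for $m$, and let $((a_i,\tuple b_i)\colon i\in\vnn)$ be an infinite sequence in $\vnn\times\vnn^m=\vnn^{m+1}$. Apply the $m=1$ case to the sequence $(a_i)$ to extract an infinite subsequence on which the first coordinate is weakly increasing; then apply the inductive hypothesis to the corresponding $\tuple b$-coordinates to extract a further infinite subsequence on which the last $m$ coordinates are coordinatewise weakly increasing. Taking the two extractions together gives an infinite subsequence of the original that is weakly increasing in $(\vnn^{m+1},\leq)$.

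The argument is entirely elementary; the only real point to watch is keeping the two-stage extraction straight and remembering that a weakly increasing pair $x_i\leq x_j$ with $i<j$ already contradicts the assumption that $\{x_i\colon i\in\vnn\}$ is an antichain. There is no genuine obstacle — this is essentially Dickson's lemma — so the "hard" part is purely presentational: packaging the induction cleanly enough that the reduction over the finite factor $n$ is visibly a consequence of pigeonhole and not mistaken for a separate induction.
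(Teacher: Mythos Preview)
Your proof is correct, but it takes a genuinely different route from the paper's. You establish the stronger well-quasi-order property by successive subsequence extraction: pigeonhole to fix the $n$-coordinate, then induction on $m$ where at each step you first extract a subsequence monotone in the new coordinate and then apply the inductive hypothesis to the remaining $m$ coordinates. The paper instead argues directly for the antichain statement, also by induction on $m$ after the same reduction to $n=1$, but with a covering argument: assuming an infinite antichain $S\subseteq\vnn^{\ell+1}$, fix some $\sigma\in S$ and observe that for each coordinate $j$ and each value $i\leq\sigma(j)$ the slice $\{\xi\in S:\xi(j)=i\}$ is in bijection with an antichain of $\vnn^{\ell}$, hence finite; but anything outside the finite union of these slices strictly dominates $\sigma$, contradicting incomparability. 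Your approach buys you the full WQO conclusion (an infinite weakly increasing subsequence, not merely a comparable pair) at the cost of the auxiliary peak argument for $m=1$; the paper's slicing argument is more self-contained and tailored to exactly the antichain claim that is needed downstream.
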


\begin{proof}
The general case
follows from the case when $n=1$, since if $S$ is an antichain of
$(\vnn^m\times n,\leq)$, then 
\begin{equation*}
S=\bigcup_{j<n}\{(\xi,h)\in S\colon h=j\}, 
\end{equation*}
and each component of the union is in bijection with
an antichain of $(\vnn^m,\leq)$.  As an inductive hypothesis,
suppose every antichain of 
$(\vnn^{\ell},\leq)$ is finite; but suppose also, if possible, that
there is an infinite 
antichain $S$ of $(\vnn^{\ell+1},\leq)$.  Then $S$ contains some
$\sigma$.  By inductive hypothesis, the subset
\begin{equation*}
  \bigcup_{j\leq\ell}\bigcup_{i\leq\sigma(j)}\{\xi\in S\colon\xi(j)=i\}
\end{equation*}
of $S$
is a finite union of finite sets, so its complement in
$S$ has infinitely many elements $\tau$; but then $\sigma<\tau$, so
$S$ was not an antichain.
\end{proof}

\begin{theorem}\label{thm:dcf}
Suppose $(K,\partial_0,\dots,\partial_{m-1})\models\mDF$.  Then the
following are equivalent:
\begin{enumerate}
  \item\label{item:ec}
The model
$(K,\partial_0,\dots,\partial_{m-1})$ of $\mDF$ is existentially closed.
\item\label{item:<}
For all positive integers $r$ and $n$, if $K$ has an extension
$K(a_h^{\xi}\colon\size{\xi}\leq2r\land h<n)$ meeting the
differential condition such that $\size{\sigma}\leq r$ whenever
$a_k^{\sigma}$ is a minimal separable leader, then the tuple
$(a_h^{\xi}\colon\size{\xi}<2r\land h<n)$ has a
specialization $(\partial^{\xi}b_h\colon\size{\xi}<2r\land
h<n)$ for some tuple $(b_h\colon h<n)$ of elements of $K$. 
\end{enumerate}
\end{theorem}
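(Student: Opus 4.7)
The plan is to translate between finite polynomial systems in the differential variables and extensions $K(a_h^{\xi}\colon\size{\xi}\leq 2r\land h<n)$ meeting the differential condition. Theorem~\ref{thm:first} is the bridge between the two sides, and I will invoke it in each direction of the equivalence.

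For (\ref{item:ec})~$\Rightarrow$~(\ref{item:<}): assume $(K,\partial_0,\dots,\partial_{m-1})$ is existentially closed and $K(a_h^{\xi}\colon\size{\xi}\leq 2r\land h<n)$ satisfies the hypothesis of (\ref{item:<}). Theorem~\ref{thm:first} supplies a compatible extension $(M,D_0,\dots,D_{m-1})$, so setting $c_h:=a_h^{(0,\dots,0)}$ gives $D^{\xi}c_h=a_h^{\xi}$ whenever $\size{\xi}<2r$. By Hilbert's basis theorem, the ideal of polynomials over $K$ vanishing at $(a_h^{\xi}\colon\size{\xi}<2r\land h<n)$ is generated by some finite list $f_1,\dots,f_N$. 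The system $\bigwedge_i f_i(\partial^{\xi}x_h\colon\size{\xi}<2r\land h<n)=0$ is satisfied in $M$ by $(c_h)$, and existential closedness produces a solution $(b_h)$ in $K$; the assignment $a_h^{\xi}\mapsto\partial^{\xi}b_h$ respects every generator of the ideal and is therefore the required specialization.

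For (\ref{item:<})~$\Rightarrow$~(\ref{item:ec}): let $\phi(\tuple x)$ be a system over the model, written as $\bigwedge_i f_i=0\land\bigwedge_{j<p} g_j\neq 0$, with the $f_i$ and $g_j$ ordinary polynomials in the $\partial^{\xi}x_h$ for $\size{\xi}<s$ and $h<n$, and suppose $(c_h)$ solves $\phi$ in some extension $(M,D_0,\dots,D_{m-1})$. A specialization need not preserve inequations, so I first clear them: set $y_j:=g_j(D^{\xi}c_h\colon\size{\xi}<s\land h<n)^{-1}\in M$, which is legitimate because $g_j\neq 0$ at $\tuple c$, and record the equation $g_j\cdot y_j=1$. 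The augmented tuple $(c_0,\dots,c_{n-1},y_0,\dots,y_{p-1})$ and all of its $D^{\xi}$-images lie in $M$ and so generate a subfield of $M$ that automatically meets the differential condition.

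The hypothesis on minimal separable leaders in (\ref{item:<}) remains to be arranged. For each fixed index $h<n+p$, the set of $\sigma$ with $a_h^{\sigma}$ a minimal separable leader is an antichain in $(\vnn^m,\leq)$, hence by Lemma~\ref{lem:fin} finite; the union over $h$ is finite, so its heights are bounded by some $R$. Pick $r\geq\max(\lceil s/2\rceil,R)$, apply (\ref{item:<}) to $K(a_h^{\xi}\colon\size{\xi}\leq 2r\land h<n+p)$, and obtain a specialization $a_h^{\xi}\mapsto\partial^{\xi}b_h$ with $(b_h)_{h<n+p}\in K$. The equations $f_i=0$ transfer directly to $K$, and each equation $g_j\cdot y_j=1$ transfers as $g_j(\partial^{\xi}b_h\colon\size{\xi}<s\land h<n)\cdot b_{n+j}=1$, forcing $g_j\neq 0$ at $(b_0,\dots,b_{n-1})$, so $(b_0,\dots,b_{n-1})$ solves $\phi$ in $K$. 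I expect the main obstacle to be this inequation maneuver together with the verification that adjoining the $y_j$ neither spoils the differential condition nor enlarges the set of minimal separable leaders beyond a bounded height—both of which are automatic once the $y_j$ are chosen inside $M$ and Lemma~\ref{lem:fin} is applied.
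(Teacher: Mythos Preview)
Your proof is correct and follows essentially the same route as the paper's.  In the direction (\ref{item:ec})~$\Rightarrow$~(\ref{item:<}) the two arguments are identical; in the direction (\ref{item:<})~$\Rightarrow$~(\ref{item:ec}) the paper simply asserts that every system over the model is equivalent to a system of equations and then proceeds exactly as you do (invoke Lemma~\ref{lem:fin} to bound the heights of minimal separable leaders, choose $r$ accordingly, apply (\ref{item:<})), whereas you spell out the Rabinowitsch trick $g_j\neq 0\rightsquigarrow g_j\cdot y_j=1$ explicitly---your worry that adjoining the $y_j$ might disturb the differential condition or the leader structure is, as you note, unfounded since everything lives inside the given model $M$.
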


\begin{proof}
Assume~\eqref{item:ec} and the hypothesis of~\eqref{item:<}.  Let $S$ be a 
  (finite) generating set of the ideal of
 $(a_h^{\xi}\colon\size{\xi}<2r\land h<n)$ over $K$.  By
  Theorem~\ref{thm:first}, the system 
  \begin{equation*}
    \bigwedge_{f\in S}f(\partial^{\xi}x_h\colon\size{\xi}<2r\land h<n)=0
  \end{equation*}
has a solution in some extension, hence it has a solution
in $K$ itself, which means the conclusion of~\eqref{item:<} holds.
So~\eqref{item:<} is necessary for~\eqref{item:ec}.

Every system over $(K,\partial_0,\dots,\partial_{m-1})$ is equivalent to a
system of equations.  Suppose such a system has a
solution $(a_h\colon h<n)$ in some extension $(L,D_0,\dots,D_{m-1})$.  
Then the extension $K(\partial^{\xi}a_h\colon(\xi,h)\in(\vnn^m\times
n))$ has a \emph{finite} set of minimal separable leaders, by
Lemma~\ref{lem:fin}, since this
set is indexed by an antichain of $(\vnn^m\times n,\leq)$.
Hence there is $r$ large enough that all of these minimal separable
leaders are also generators of 
$K(D^{\xi}a_h\colon\size{\xi}\leq r\land h<n)$.  We may assume also
that $r$ is large enough that $\size{\sigma}\leq r$ for every
derivative $\partial^{\sigma}x_k$ that appears in the original
system.  The hypothesis of~\eqref{item:<} is now satisfied when each
$a^{\sigma}_k$ is taken as $D^{\sigma}a_k$.  If the conclusion
of~\eqref{item:<} follows, then $(b_h\colon h<n)$ is a solution of the
original system.
Thus,~\eqref{item:<} is sufficient
for~\eqref{item:ec}. 
\end{proof}

\begin{corollary}\label{cor:dcf}
  The theory $\mDF$ has a model-companion, $\mDCF$.
\end{corollary}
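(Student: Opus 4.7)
The plan is to reduce the corollary to the model-theoretic machinery developed in Section~\ref{sect:mt}: the final paragraph there records that if $T$ is inductive and the class of its existentially closed models is elementary, then the theory of that class is the model-companion of $T$. So there are two things to verify.

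First, $\mDF$ is inductive. Its axioms --- the field axioms, the additivity and Leibniz rules for each $\partial_i$, and the commutation relations $[\partial_i,\partial_j]=0$ --- are all $\forall\exists$ (indeed all but the multiplicative-inverse axiom are universal), so $\Mod{\mDF}$ is closed under unions of chains.

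Second, by Theorem~\ref{thm:dcf}, a model of $\mDF$ is existentially closed iff it satisfies condition~(ii) for every pair of positive integers $(r,n)$. I would express this condition as a first-order scheme as follows. For fixed $r$ and $n$, an extension $K(a_h^\xi\colon\size\xi\leq 2r\land h<n)$ of $K$ is determined up to $K$-isomorphism by its ideal $I$ of algebraic relations, which is finitely generated by the Hilbert basis theorem. Quantifying universally over the coefficients (in $K$) of a fixed template of generators for $I$, each of the hypotheses of condition~(ii) is first-order: primeness of $I$ is first-order; the differential condition reduces, by~\eqref{eqn:sumeta}, to a finite conjunction of polynomial identities on the coefficients and their $\partial_i$-derivatives; and the stipulation that every minimal separable leader $a_k^\sigma$ satisfy $\size\sigma\leq r$ becomes a disjunction, over the finitely many candidate configurations of which $a_h^\xi$ with $\size\xi\leq r$ play the role of separable leaders, of assertions that the relevant partial derivatives do not vanish modulo $I$ and that the non-leader $a_h^\xi$ of height $\leq r$ are algebraically independent modulo $I$. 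The conclusion --- the existence of $b_h\in K$ such that $(\partial^\xi b_h)$ specializes the generators --- is manifestly existential. Conjoining the resulting universal sentences over all templates and all pairs $(r,n)$, and adjoining the scheme to $\mDF$, one axiomatizes exactly the existentially closed models of $\mDF$, which are therefore an elementary class. Applying the cited corollary then produces the model-companion $\mDCF$.

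The main obstacle will be the bookkeeping in the second step: translating ``$a_k^\sigma$ is a minimal separable leader'' into first-order form uniformly in the template. One must enumerate, for each fixed $(r,n)$ and each choice of monomial supports for the generators of $I$, the finitely many possible leader configurations, and then encode separability of each chosen leader over its predecessors together with the failure of any candidate at lower height to be a separable leader. Because these enumerations are finite, the resulting scheme is genuinely first-order, and the rest of the argument is a direct appeal to Theorem~\ref{thm:dcf} and to the abstract criterion from Section~\ref{sect:mt}.
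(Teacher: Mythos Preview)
Your proposal is correct and takes essentially the same approach as the paper: both deduce the corollary from Theorem~\ref{thm:dcf} by showing that condition~(ii) there can be expressed as a first-order scheme, so that the existentially closed models of $\mDF$ form an elementary class (and you rightly note that $\mDF$ is inductive). The paper's encoding is slightly more explicit---for each leader it records the minimal polynomial over the non-leading predecessors and asserts irreducibility, which is transparently universal, rather than asserting primeness of the full ideal of relations---but the strategy is the same.
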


\begin{proof}
Let $(K,\partial_0,\dots,\partial_{m-1})$ be a model of $\mDF$, let $L$ be an extension $K(a_h^{\xi}\colon\size{\xi}\leq2r\land
h<n)$ of $K$  meeting the differential condition, and suppose
$\size{\sigma}\leq r$ whenever $a_k^{\sigma}$ is a minimal separable
leader.  
That is, assume the hypothesis of Condition~\eqref{item:<} of
 the theorem. 
Write $\tuple a$ for $(a_h^{\xi}\colon\size{\xi}<2r\land h<n)$ and $\tuple b$ for $(a_h^{\xi}\colon\size{\xi}=2r\land h<n)$, so that $L=K(\tuple a,\tuple b)$.
The ideal of $K[\tuple x,\tuple y]$ comprising the polynomials that are $0$ at
$(\tuple a,\tuple b)$ is generated by a set $\{f(\tuple p,\tuple
x,\tuple y)\colon f\in
T\}$, where $T$ is a finite subset of $\mathbb Z[\tuple z,\tuple
  x,\tuple y]$, and $\tuple p$ is a (finite) list of parameters from
$K$.  We may assume that $T$ has a subset $S$, where $S\included\mathbb Z[\tuple z,\tuple x]$ and $\{g(\tuple p,\tuple x)\colon g\in S\}$ generates the ideal of $\tuple a$.
  We need to ensure that there is a formula $\phi(\tuple z)$ such
that 
\begin{enumerate}
\item
 $\phi(\tuple p)$ holds in $(K,\partial_0,\dots,\partial_{m-1})$;
 \item
 for \emph{every} model $(K,\partial_0,\dots,\partial_{m-1})$ of
 $\mDF$ and every tuple $\tuple q$ from $K$, if $\phi(\tuple q)$ holds
 in the model, then the polynomials $f(\tuple q,\tuple x,\tuple y)$ (where $f\in T$)
 generate over $K$ a prime ideal, a generic zero of which generates an
 extension of $K$ as in the hypothesis of Condition~\eqref{item:<} of
 the theorem. 
\end{enumerate}
In this case, by the theorem, $\mDCF$ will have, as axioms, the axioms of $\mDF$, along with one
sentence of the form 
\begin{equation*}
\phi(\tuple z)\lto\Exists{\tuple x}\Bigl(\bigwedge_{g\in
  S}g(\tuple z,\tuple x)=0\land
\bigwedge_{i<m}\bigwedge_{h<n}\bigwedge_{\size{\xi+\ichar}<2r}x_h^{\xi+\ichar}
=\partial_ix_h^{\xi}\Bigr). 
\end{equation*}
for each model $(K,\partial_0,\dots,\partial_{m-1})$ of $\mDF$ and
each extension $K(a_h^{\xi}\colon\size{\xi}\leq2r\land h<n)$ as above.

So now we must show that the formula $\phi$ exists as desired; that
is, we must show that there are first-order conditions on the parameters
$\tuple p$ as required.  This we can do as follows.

That the ideal $I$ generated by some finite set of polynomials is a prime
ideal---this condition is a first-order condition on the parameters of
the polynomials, by van den Dries and
Schmidt~\cite{MR739626}.  (As they point
  out, the results of theirs 
  that we shall use are not original with them; the
  proofs are original.)
In particular, there is some $N$ depending only on the degrees of the
generating polynomials and their numbers of variables such that if $fg\in
I\lto f\in I\lor g\in I$ for all polynomials $f$ and $g$ of degree
less than $N$, then the implication holds for all $f$ and $g$. 

Here $I$ may be the ideal of $(\tuple a, \tuple b)$ or of $\tuple a$, defined in terms of $T$ or $S$ as above.
The extension $K(\tuple a,\tuple b)$ meets the differential condition,
because the derivatives of each $g(\tuple p,\tuple x)$ (where $g\in S$) are certain
combinations (which can be made explicit) of the $f(\tuple p,\tuple x,\tuple y)$ (where $f\in T$).
Thus there is a \emph{sufficient}
first-order condition on the parameters $\tuple p$ for the meeting of the
differential condition; and $\tuple p$ does meet this condition. 

Alternatively, if a polynomial $F$
and finitely many additional  polynomials $G$ with
parameters $\tuple p$ are given, the condition that $F$ be a member of
the ideal $I$ generated by the $G$ is a first-order condition on
$\tuple p$.  This follows from the existence of a uniform bound on the
degrees of the polynomial coefficients needed to obtain $F$ from the
polynomials $G$ if indeed $F\in I$.  This bound is uniform in the
sense that it depends only on the degrees of $F$ and the $G$ and the
number of their variables.  The existence of this bound is again shown
by van den Dries and
Schmidt~\cite{MR739626}.

Moreover, for a list $\tuple w$ of variables that appear in the
polynomials $G$, the condition that no non-zero polynomial in $\tuple
w$ alone belongs to $I$ is
also a first-order condition on $\tuple p$.  Indeed, since this
condition is invariant under replacement of the underlying field by
its algebraic closure, we may appeal to the general result that
Morley rank is definable in algebraically closed fields and more
generally in strongly minimal sets~\cite[\S6.2]{MR1924282}.  For some
formula $\psi$, the condition holds if and only if $\psi(\tuple p)$ is
true in the algebraic closure of the underlying field; but by
quantifier-elimination in algebraically closed fields, we may
assume also $\psi$ is quantifier-free, so $\psi(\tuple p)$ is true in
the algebraic closure of a field if and only if it is true in the
field itself.

For each leader in $(\tuple a,\tuple b)$, we may assume that some irreducible
polynomial in $T$ shows that it is a leader.  The
irreducibility of this polynomial is a first-order condition on $\tuple p$
(since in general 
primeness of ideals is first-order).  The leader is separable if and only if
the formal derivative of its irreducible polynomial is not zero, that
is, not all of its coefficients 
belong to the ideal generated by $\{f(\tuple p,\tuple x,\tuple y):f\in T\}$; as noted above, this is a first-order
condition.  The condition that an entry in $(\tuple a,\tuple b)$ is
\emph{not} a leader at all is also a first-order condition on the
parameters, since this condition is just that no non-zero polynomial
in certain variables belongs to the ideal generated by $\{f(\tuple p,\tuple x,\tuple y)\colon f\in T\}$. 

Now we can arrange that $\phi(\tuple z)$ establishes all of the
conditions discussed; and this is enough. 
\end{proof}

By Theorem~\ref{thm:if-mDF}, $\DF^m$ now also has a model-companion.

\subsection{Differential forms again}

The condition in Theorem~\ref{thm:first} can be adjusted to yield the
following: 

\begin{theorem}\label{thm:second}
Suppose $(K,\partial_0,\dots,\partial_{m-1})\models\mDF$, and $K$
has an extension $K(a_h^{\xi}\colon\size{\xi}\leq\size{\mu}\land
h<n)$ meeting the differential condition for some $\mu$ in $\vnn^m$
and some positive integer $n$.  Suppose further that, if $a_k^{\sigma}$ is a
minimal separable leader, 
then $\sigma\leq\mu$.  Then  $(K,\partial_0,\dots,\partial_{m-1})$
has an extension 
compatible with
$K(a_h^{\xi}\colon\size{\xi}<\size{\mu}\land h<n)$.  
\end{theorem}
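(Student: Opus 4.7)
The plan is to follow the proof of Theorem~\ref{thm:first} closely, replacing the height bound $\size{\sigma}\leq r$ on minimal separable leaders throughout by the componentwise bound $\sigma\leq\mu$, and the corresponding bound $\size{\sigma}<2r$ on inseparable leaders by $\size{\sigma}\leq\size{\mu}$. Only two places require genuine attention.

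First, as in the proof of Theorem~\ref{thm:first}, set $L=K(a_h^{\xi}\colon\size{\xi}<\size{\mu}\land h<n)$. The differential condition forces the tuple $(a_h^{\xi}\colon\size{\xi}=\size{\mu}\land h<n)$ to satisfy certain linear equations over $L$; since the hypothesis exhibits one solution, one replaces this tuple with a \emph{generic} solution of those equations. A generic solution admits no algebraic relations over $L$ beyond the forced linear ones, so its minimal separable leaders form a subset of those of the original tuple---in particular each such $\sigma$ still satisfies $\sigma\leq\mu$---and no entry is an inseparable leader. The recursive construction of $a_{\ell}^{\tau}$ for $(\tau,\ell)$ with $\size{\tau}>\size{\mu}$ then proceeds exactly as in Theorem~\ref{thm:first}, under the inductive hypothesis that every minimal separable leader $a_k^{\sigma}$ satisfies $\sigma\leq\mu$ and every inseparable leader $a_k^{\sigma}$ satisfies $\size{\sigma}\leq\size{\mu}$.

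The substantive change appears only in the commutativity check. Suppose both $a_{\ell}^{\tau-\ichar}$ and $a_{\ell}^{\tau-\jchar}$ are separable leaders for distinct $i,j$, and let $a_{\ell}^{\pi}$ and $a_{\ell}^{\rho}$ be minimal separable leaders below them respectively. By the inductive hypothesis $\pi\leq\mu$ and $\rho\leq\mu$, so the componentwise join $\nu=\pi\wedge\rho$ satisfies $\nu\leq\mu$, because $\mu$ is a common upper bound; hence $\size{\nu}\leq\size{\mu}<\size{\tau}$. Moreover $\nu\leq\tau$, since $\pi\leq\tau-\ichar\leq\tau$ and $\rho\leq\tau-\jchar\leq\tau$; therefore $\nu<\tau$ strictly, so $\nu\leq\tau-\kchar$ for some $k$ in $m$. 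From this point the verification of~\eqref{eqn:comm}, by case analysis on whether $k$ equals $i$, $j$, or neither, together with Lemma~\ref{lem:comm}, runs word-for-word as in the proof of Theorem~\ref{thm:first}.

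The main obstacle is the initial genericization at level $\size{\xi}=\size{\mu}$, together with the verification that it preserves the componentwise hypothesis $\sigma\leq\mu$ on minimal separable leaders; this should go through because a generic solution introduces no algebraic dependencies beyond the forced linear ones, so the set of minimal separable leaders at that level can only shrink. Once this is settled, the induction closes as sketched, yielding an extension compatible with $K(a_h^{\xi}\colon\size{\xi}<\size{\mu}\land h<n)$.
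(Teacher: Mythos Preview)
Your proposal is correct and follows essentially the same approach as the paper: both reduce to the proof of Theorem~\ref{thm:first} with only the commutativity check needing adjustment. The sole difference is cosmetic---where you form $\nu=\pi\wedge\rho$ and use $\nu\leq\mu$ to find $k$ with $\nu\leq\tau-\kchar$ (exactly as in Theorem~\ref{thm:first}), the paper instead chooses $k$ directly from $\mu(k)<\tau(k)$ and uses $\pi(k)\leq\mu(k)$ to place $\pi$ below $\tau-\ichar-\jchar$ (or the analogous index); both routes arrive at the same case analysis. (One small slip: your inductive bound on inseparable leaders should read $\size{\sigma}<\size{\mu}$ rather than $\leq$, since the genericization at level $\size{\mu}$ eliminates inseparable leaders there; this is what makes $a_\ell^{\tau-\ichar}$ non-inseparable when $\size{\tau}>\size{\mu}$.)
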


\begin{proof}
  The proof is as for Theorem~\ref{thm:first}, \emph{mutatis
  mutandis.}  What needs adjusting is the choosing of
  $a_{\ell}^{\tau}$ in case both $a_{\ell}^{\tau-\ichar}$ and
  $a_{\ell}^{\tau-\jchar}$ are separable leaders.  Again we have
  minimal separable leaders
 $a_{\ell}^{\pi}$ and $a_{\ell}^{\rho}$
below $a_{\ell}^{\tau-\ichar}$ and
$a_{\ell}^{\tau-\jchar}$ respectively.  Since we may assume
  $\size{\mu}<\size{\tau}$, there is some $k$ in $m$
  such that $\mu(k)<\tau(k)$.  If $k=j$, then
  $\pi(j)\leq\mu(j)<\tau(j)$, so
  $\pi(j)\leq(\tau-\jchar)(j)=(\tau-\ichar-\jchar)(j)$.  Then
  $\pi\leq\tau-\ichar-\jchar$, so $a_{\ell}^{\pi}$ is below
  $a_{\ell}^{\tau-\ichar-\jchar}$.  Now we can proceed as before. 
\end{proof}

As Theorem~\ref{thm:first} yields Theorem~\ref{thm:dcf}, so
Theorem~\ref{thm:second} yields a characterization of the existentially
closed models of $\mDF$.  Moreover,
Theorems~\ref{thm:first} and~\ref{thm:second} can be combined in the
following way:

\begin{theorem}\label{thm:third}
Suppose $(K,\partial_0,\dots,\partial_{m-1})\models\mDF$, and $K$
has an extension $K(a_h^{\xi}\colon\size{\xi}\leq2r\land
h<n)$ meeting the differential condition for some positive integers $n$
and $r$.  Suppose further that, for each $k$ in $m$, either
 $\size{\sigma}\leq r$ whenever $a_k^{\sigma}$ is a 
minimal separable leader, or else there is some $\tau$ in $\vnn^m$
such that $\size{\tau}=2r$, and
 $\size{\sigma}\leq\size{\tau}$ whenever $a_k^{\sigma}$ is a 
minimal separable leader.
Then  $(K,\partial_0,\dots,\partial_{m-1})$
has an extension 
compatible with
$K(a_h^{\xi}\colon\size{\xi}<2r\land h<n)$.    
\end{theorem}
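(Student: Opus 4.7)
The plan is to re-run the inductive construction from the proof of Theorem~\ref{thm:first} verbatim, extending the $\partial_i$ to derivations $D_i$ on successively larger intermediate fields $K(a_h^{\xi}\colon(\xi,h)\tl(\tau,\ell))$. The differential-condition hypothesis handles everything up through height~$2r$; each new generator $a_{\ell}^{\tau}$ with $\size{\tau}>2r$ is then either a fresh transcendental (when no $a_{\ell}^{\tau-\ichar}$ is a separable leader) or is forced to equal $D_i a_{\ell}^{\tau-\ichar}$ for some~$i$. The single delicate point, common to both Theorems~\ref{thm:first} and~\ref{thm:second}, is the commutation identity~\eqref{eqn:comm},
\[
D_i a_{\ell}^{\tau-\ichar}=D_j a_{\ell}^{\tau-\jchar},
\]
to be verified when, for distinct $i,j$ in $m$, both $a_{\ell}^{\tau-\ichar}$ and $a_{\ell}^{\tau-\jchar}$ happen to be separable leaders.

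Fix such $\ell$, $\tau$, $i$, $j$, and let $a_{\ell}^{\pi}$, $a_{\ell}^{\rho}$ be minimal separable leaders (with variable index~$\ell$) below $a_{\ell}^{\tau-\ichar}$ and $a_{\ell}^{\tau-\jchar}$ respectively; set $\nu=\pi\wedge\rho$, the least upper bound of $\{\pi,\rho\}$ with respect to $\leq$. I claim, as at the end of the proof of Theorem~\ref{thm:first}, that it suffices to exhibit some $k\in m$ with $\nu\leq\tau-\kchar$. For then $a_{\ell}^{\tau-\kchar}$, and when needed $a_{\ell}^{\tau-\ichar-\kchar}$ and $a_{\ell}^{\tau-\jchar-\kchar}$, all lie above $a_{\ell}^{\pi}$ or $a_{\ell}^{\rho}$ and so are separable leaders by Lemma~\ref{lem:above}; Lemma~\ref{lem:comm} then delivers commutativity of the relevant brackets at the relevant derivatives, and the three-way case analysis ($k=i$, $k=j$, or $k\notin\{i,j\}$) closes out the verification exactly as in Theorem~\ref{thm:first}.

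The crux is thus to produce such a $k$ under the hypothesis of the theorem, which for the particular variable index~$\ell$ affords either (a) $\size{\sigma}\leq r$ for every minimal separable leader $a_{\ell}^{\sigma}$ of that index, or (b) a fixed $\mu_{\ell}\in\vnn^m$ with $\size{\mu_{\ell}}=2r$ bounding all such $\sigma$ in the product order. In case~(a), $\size{\nu}\leq\size{\pi}+\size{\rho}\leq 2r<\size{\tau}$ forces $\nu<\tau$, so some~$k$ works, just as in Theorem~\ref{thm:first}. In case~(b), $\nu\leq\mu_{\ell}$ and $\size{\mu_{\ell}}<\size{\tau}$ yield some~$k$ with $\mu_{\ell}(k)<\tau(k)$; then $\nu(k)\leq\mu_{\ell}(k)\leq\tau(k)-1$, while for any $l\neq k$ one has $\nu(l)\leq\max((\tau-\ichar)(l),(\tau-\jchar)(l))\leq\tau(l)$, so again $\nu\leq\tau-\kchar$, just as in Theorem~\ref{thm:second}. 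The main (and essentially only) obstacle is recognizing that the commutation check at $a_{\ell}^{\tau-\ichar-\jchar}$ is purely intra-index, so the per-index mixing of the two kinds of bounds poses no difficulty; the induction closes without introducing any new minimal separable or inseparable leader, and the resulting field furnishes the desired compatible extension.
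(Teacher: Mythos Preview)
Your proof is correct and is precisely what the paper does: its entire proof reads ``Combine the proofs of Theorems~\ref{thm:first} and~\ref{thm:second}.'' You have carried out that combination in full detail, correctly observing that the commutation check at $a_{\ell}^{\tau-\ichar-\jchar}$ involves only minimal separable leaders with the single variable index~$\ell$, so that the per-index choice between the height bound of Theorem~\ref{thm:first} and the product-order bound of Theorem~\ref{thm:second} suffices.
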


\begin{proof}
  Combine the proofs of
Theorems~\ref{thm:first} and~\ref{thm:second}.
\end{proof}

There is a corresponding first-order characterization of the
models of $\mDCF$, parallel to Theorem~\ref{thm:dcf} and
Corollary~\ref{cor:dcf}.

\subsection{Another sufficient condition}

If $(K,\partial_0,\dots,\partial_{m-1})\models\mDF$, and
$K(a_h^{\xi}\colon\size{\xi}\leq\size{\pi}\land h<n)$ is an extension $L$
of $K$ meeting the differential condition, this by itself is not enough
to ensure that $(K,\partial_0,\dots,\partial_{m-1})$
has an extension compatible with
$L$.  However, if such
an extension does exist, then its existence can be shown by means of 
Theorem~\ref{thm:first}, provided $\size{\pi}$ can be made large
enough:  this is
Theorem~\ref{thm:s} below, which relies on the existence of
bounds as in the following.

\begin{lemma}
  For all positive integers $m$ and $n$, for all sequences $(a_i\colon
  i\in\vnn)$ of positive integers, there is a 
  bound on the length of strictly increasing chains
  \begin{equation}\label{eqn:chain}
    S_0\pincluded S_1\pincluded S_2\pincluded\dotsb
  \end{equation}
of antichains $S_k$ of $(\vnn^m\times n,\leq)$, where also
$S_k\included\{(\xi,h)\colon\size{\xi}\leq a_k\}$.
\end{lemma}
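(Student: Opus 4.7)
The plan is to prove the bound by a König's lemma argument, using the preceding Lemma~\ref{lem:fin} (which says antichains of $(\vnn^m\times n,\leq)$ are finite) to rule out infinite branches.

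First I would observe that for each positive integer $k$, the set $\{(\xi,h)\in\vnn^m\times n\colon\size{\xi}\leq a_k\}$ is finite, since there are only finitely many $\xi$ in $\vnn^m$ of a given height, and $h$ ranges over a finite set. Consequently, at each stage $k$, there are only finitely many antichains $S_k$ that are eligible to appear in position $k$ of the chain \eqref{eqn:chain}.

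Next I would form the tree $T$ whose nodes at level $k$ are all finite strictly increasing sequences $(S_0\pincluded S_1\pincluded\dotsb\pincluded S_k)$ of antichains of $(\vnn^m\times n,\leq)$ with each $S_i\included\{(\xi,h)\colon\size{\xi}\leq a_i\}$, ordered by end-extension. By the previous paragraph, each level of $T$ is finite, so $T$ is finitely branching. If the desired bound did not exist, then $T$ would contain nodes at arbitrarily high levels, and König's lemma would produce an infinite branch
\begin{equation*}
  S_0\pincluded S_1\pincluded S_2\pincluded\dotsb
\end{equation*}

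The key step is then to argue that the union $S=\bigcup_{k\in\vnn}S_k$ is itself an antichain of $(\vnn^m\times n,\leq)$: any two elements of $S$ lie in some common $S_k$ (the one with larger index), which is an antichain, so the two elements are $\leq$-incomparable. Since the inclusions $S_k\pincluded S_{k+1}$ are strict, $S$ is infinite. But Lemma~\ref{lem:fin} forbids infinite antichains in $(\vnn^m\times n,\leq)$, a contradiction. Hence $T$ is finite, and its depth is the required uniform bound on the lengths of chains~\eqref{eqn:chain}.

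I do not expect any step to be a real obstacle: the finiteness of each level of $T$ is combinatorial, the union-of-antichains observation is immediate, and Lemma~\ref{lem:fin} is already available. The only point worth making carefully is that the bound we obtain depends on $m$, $n$, and the whole sequence $(a_i\colon i\in\vnn)$, as the statement allows.
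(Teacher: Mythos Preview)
Your proof is correct, and it is genuinely different from the paper's. The paper proceeds by a divide-and-conquer induction: first reducing from general $n$ to $n=1$ by splitting each $S_k$ into its $n$ ``slices'' and interleaving bounds, then for $n=1$ inducting on $m$ by covering each antichain with finitely many hyperplane slices determined by a fixed element $\sigma\in S_0$. This yields, in principle, a recursive recipe for the bound in terms of $m$, $n$, and the sequence $(a_i)$.

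Your route is shorter and conceptually cleaner: you package all candidate chains into a single finitely-branching tree and invoke K\"onig's lemma together with Lemma~\ref{lem:fin}. The trade-off is that K\"onig's lemma is non-constructive, so you obtain the existence of a bound without any indication of its size, whereas the paper's argument (while messy) could in principle be unwound into an explicit, if enormous, function of the data. For the purposes of the paper, and in particular for the application in Theorem~\ref{thm:s}, mere existence is all that is needed, so nothing is lost. One small cosmetic point: your tree $T$ as described is really a forest (one root for each admissible $S_0$); either add a dummy root or note that the forest has finitely many trees, so that if it is infinite then some tree is infinite and K\"onig's lemma still applies.
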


\begin{proof}
Divide and conquer.
  First reduce to the case when $n=1$.  Indeed, suppose the claim
  does hold in this case.  Suppose also, as an inductive hypothesis,
  that the claim holds when $n=\ell$.  Now fix $m$ and the sequence
  $(a_i\colon i\in\vnn)$ or rather  $(a(i)\colon i\in\vnn)$, and
  consider arbitrary chains as 
  in~\eqref{eqn:chain}, where $n=\ell+1$.  Analyze each $S_k$ as
  $S_k'\cup S_k''$, where
  \begin{align*}
    S_k' &=\{(\xi,h)\in S_k\colon h<\ell\},&
    S_k''&=\{(\xi,h)\in S_k\colon h=\ell\}.
  \end{align*}
For each $k$ such that $S_{k+1}$ exists, at least one of the
inclusions $S_k'\included S_{k+1}'$ and $S_k''\included S_{k+1}''$
is strict; also, by our assumption, there is an upper bound
$f(k)$ on those $r$ such that
\begin{equation}\label{eqn:sk''}
  S_k''\pincluded S_{k+1}''\pincluded\dotsb\pincluded S_{r-1}''.
\end{equation}
The function $f$ depends only on
$m$ and $(a_i\colon i\in\vnn)$), not on the choice
of chain in~\eqref{eqn:chain}.

Let $k(0)=0$, and if $k(i)$ has been chosen, let $k(i+1)$ be the least
$r$, if it exists, such that $S_{k(i)}'\pincluded S_r'$.  
Here $k(i)$ does depend on the chain.
But if $r$ is maximal in~\eqref{eqn:sk''}, and $S_r'$ exists,
then $S_k'\pincluded S_r'$.
Hence
$k(i+1)\leq f(k(i))$.  Since the function $f$ is not necessarily
increasing, we derive from it the increasing function $g$, where
$g(k)=\max_{i\leq k}f(i)$.  Then $x\leq y\implies g(x)\leq g(y)$, so
\begin{equation}\label{eqn:krg}
  k(r)\leq f(k(r-1))\leq g(k(r-1))\leq g\circ g(k(r-2))\leq\dotsb
\leq
\overbrace{g\circ\dotsb\circ g}^r(0)=g^r(0).
\end{equation}
In particular,
$S_{k(r)}\included\{(\xi,h)\colon\size{\xi}\leq
a(g^r(0))\}$.  The sequence $(a(g^i(0))\colon i\in\vnn)$ does not
depend on the original chain.
Hence the inductive hypothesis applies to the chain
\begin{equation}\label{eqn:chain2}
  S_{k(0)}'\pincluded S_{k(1)}'\pincluded\dotsb,
\end{equation}
showing
that there is $s$ (independent of the original chain) such that
$k(s)$ is defined, and $r\leq s$ for all entries $S_{k(r)}'$
in~\eqref{eqn:chain2}.  
Hence also, by~\eqref{eqn:krg}, if
$S_r'$ is an entry in~\eqref{eqn:chain2}, then $r\leq k(s)\leq
g^s(0)$. 

Now suppose $S_r'$ is the final entry in~\eqref{eqn:chain2}.  Then
$S_r''\pincluded S_{r+1}''\pincluded\dotsb$; but if $S_t''$ is an
entry of this chain, then $t<f(r)\leq g(r)\leq g(g^s(0))=g^{s+1}(0)$. 

Therefore the
original chain in~\eqref{eqn:chain} has a final entry $S_t$, where
$t<g^{s+1}(0)$.  Thus the claim holds when $n=\ell+1$.  By induction, the
claim holds for all positive $n$, provided it holds when $n=1$.

It remains to show that, for all positive $m$, for all sequences
$(a_i\colon i\in\vnn)$, there is a bound on the length of chains
\begin{equation}\label{eqn:chain3}
  S_0\pincluded S_1\pincluded S_2\pincluded\dotsb
\end{equation}
of antichains $S_k$ of $(\vnn^m,\leq)$,
where $S_k\included\{\xi\colon\size{\xi}\leq a_k\}$.  The claim is
trivially true when $m=1$.  Suppose it is true when $m=\ell$.  Now let
$m=\ell+1$, and suppose we have a chain as in~\eqref{eqn:chain3}.  We
may assume that $S_0$ contains some $\sigma$.  If $i<m$ and
$j\in\vnn$, let
\begin{equation*}
  S_k^{i,\,j}=\{\xi\in S_k\colon\xi(i)=j\}.
\end{equation*}
Then the inductive hypothesis applies to chains of the form
\begin{equation*}
  S_{k(0)}^{i,\,j}\pincluded
  S_{k(1)}^{i,\,j}\pincluded
  S_{k(2)}^{i,\,j}\pincluded\dotsb.
\end{equation*}
Moreover, if $\tau\in S_k$, then $\tau(i)\leq\sigma(i)$ for some $i$
in $m$ (since
$\sigma$ is also in $S_k$, and this is an antichain).  Hence
\begin{equation*}
  S_k=\bigcup_{i<m}\bigcup_{j\leq\sigma(i)}S_k^{i,\,j},
\end{equation*}
a union of no more than $\size{\sigma}+m$-many sets, hence no more than
$a_0+m$-many 
sets.  So the proof can proceed as in the reduction to $n=1$: for each
$k$ such that $S_{k+1}$ exists, one of the inclusions
$S_k^{i,\,j}\included S_{k+1}^{i,\,j}$ is strict, and so forth.
\end{proof}

\begin{theorem}\label{thm:s}
  Suppose $m$, $r$,
  and $n$ are positive integers.  Then there is a positive integer $s$, where $r\leq s$, such that, if 
 $(K,\partial_0,\dots,\partial_{m-1})\models\mDF$, and
$K(a_h^{\xi}\colon\size{\xi}\leq s\land
  h<n)$ meets the differential condition, then
  $(K,\partial_0,\dots,\partial_{m-1})$ has an extension that is
  compatible with $K(a_h^{\xi}\colon\size{\xi}\leq
  r\land h<n)$.
\end{theorem}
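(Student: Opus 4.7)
The plan is to apply Theorem~\ref{thm:first} via a doubling iteration, using the preceding lemma on chains of antichains to bound the iteration's length uniformly. Given any extension $E=K(a_h^{\xi}\colon\size{\xi}\leq s\land h<n)$ meeting the differential condition, I seek an $r'$ with $r\leq r'$ and $2r'\leq s$ such that every minimal separable leader of the restriction $E|_{2r'}=K(a_h^{\xi}\colon\size{\xi}\leq 2r'\land h<n)$ has height at most $r'$; given such an $r'$, Theorem~\ref{thm:first} furnishes an extension of $(K,\partial_0,\dots,\partial_{m-1})$ compatible with $K(a_h^{\xi}\colon\size{\xi}<2r'\land h<n)$, hence with $K(a_h^{\xi}\colon\size{\xi}\leq r\land h<n)$ (since $r<2r'$). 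The theorem then reduces to choosing $s$, depending only on $m$, $n$, and $r$, that forces such an $r'$ to exist for every $E$.

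Supposing for contradiction that some $E$ at height $s$ admits no such $r'$, I build inductively $r_0=r$ and $r_{i+1}$ equal to the height of the highest minimal separable leader of $E$ lying in $(r_i,2r_i]$; failure at $r'=r_i$ guarantees such a leader exists whenever $2r_i\leq s$. The recursion yields $r_i\leq 2^{i}r$, so the iteration persists as long as $2r_i\leq s$, producing distinct minimal separable leaders $(\sigma_i,k_i)$ with $\size{\sigma_i}=r_{i+1}$ at each step.

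The main obstacle is bounding this iteration's length uniformly in $E$. The preceding lemma bounds chains of antichains in $(\vnn^m\times n,\leq)$, but minimal separable leaders from distinct slices $k<n$ need not form an antichain under the product order (they do so only within each fixed slice). I therefore apply the lemma slice-by-slice in its $n=1$ instance, combined with pigeonhole: after $L$ iterations, some slice $k^{*}$ has accumulated at least $\lceil L/n\rceil$ minimal separable leaders, which, ordered by discovery, give a strict chain of antichains in $(\vnn^m,\leq)$. Choosing a fixed height-bound sequence $(a_j)$ that grows fast enough to dominate the doubling rate $r_i\leq 2^{i}r$ and absorb the pigeonhole factor $n$, the lemma yields a finite bound $B=B(m,n,r)$ on the chain's length, and hence on $\lceil L/n\rceil$. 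Setting $s$ just larger than the height threshold allowing $L>nB$ iterations forces a contradiction, so a valid $r'$ exists for every $E$ at this height. The technical subtlety lies precisely in the choice of $(a_j)$ and the pigeonhole bookkeeping needed to apply the lemma with a sequence independent of $E$; all the algebraic content is carried by Theorem~\ref{thm:first} and the combinatorial lemma.
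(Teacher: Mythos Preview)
Your overall strategy---doubling iteration plus the antichain lemma to force a level where Theorem~\ref{thm:first} applies---is exactly the paper's. The paper's execution is simpler: set $K_u=K(a_h^{\xi}\colon\size{\xi}\leq 2^ur\land h<n)$ and let $S_u$ be the set of \emph{all} minimal separable leaders of $K_u$; then $S_0\included S_1\included\dotsb$ with $S_u\included\{(\xi,h)\colon\size{\xi}\leq 2^ur\}$, so the lemma (with the fixed sequence $a_u=2^ur$) bounds the number of strict inclusions; hence $S_u=S_{u+1}$ for some $u$, which is precisely the hypothesis of Theorem~\ref{thm:first} for $K_{u+1}$ with $2^ur$ in the role of~$r$. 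Your worry that the $S_u$ fail to be antichains of $(\vnn^m\times n,\leq)$ under the full product order is well-observed but harmless: the lemma's proof only uses that each fixed-$k$ slice of $S_u$ is an antichain of $\vnn^m$, which is automatic for minimal separable leaders.

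Your pigeonhole detour, by contrast, has a genuine gap. After restricting to slice $k^*$, the $j$th leader there has height $r_{i_j+1}\leq 2^{i_j+1}r$; to apply the lemma with a sequence $(a_j)$ independent of $E$ you need $i_j$ bounded by a function of $j$ and $n$ alone. But nothing prevents all of slice $k^*$'s leaders from appearing in the final $\lceil L/n\rceil$ iterations, so that $i_j$ is of order $L$, not $nj$---no growth rate for $(a_j)$ ``absorbs'' this. The repair is either to index the slice-$k^*$ chain by the global step $i$ rather than the within-slice position $j$ (then $a_i=2^{i+1}r$ works, and the number of strict inclusions, at least $\lceil L/n\rceil$, is bounded by some $B(m,r)$, giving $L\leq nB(m,r)$), or simply to track the full $S_u$ as the paper does; either way the pigeonhole step is unnecessary.
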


\begin{proof}
Suppose $K(a_h^{\xi}\colon\size{\xi}\leq 2^tr\land h<n)$ 
meets the differential condition for some~$t$.  When $u\leq t$, let
  $K_u=K(a_h^{\xi}\colon\size{\xi}\leq 2^ur\land h<n)$, and let $S_u$
  be the set of minimal separable leaders of $K_u$.  Then we have an
  increasing chain $S_0\included S_1\included\dots\included S_t$.
  By the preceding lemma, there is a value of $t$, depending only on
  $m$, $r$, and $n$, large
  enough that this chain cannot be strictly increasing.  Then
  $S_u=S_{u+1}$ for some $u$ less than this~$t$.  
  Then $K_{u+1}$ satisfies the hypothesis of Theorem~\ref{thm:first}.  
So $(K,\partial_0,\dots,\partial_{m-1})$ has an extension compatible
  with $K(a_h^{\xi}\colon\size{\xi}<2^{u+1}r\land h<n)$, and \emph{a
  fortiori} with 
 $K(a_h^{\xi}\colon\size{\xi}\leq r\land h<n)$.  In short, the desired
  $s$ is $2^tr$.
\end{proof}

This theorem yields yet another first-order characterization of the models of
$\mDCF$.



\def\rasp{\leavevmode\raise.45ex\hbox{$\rhook$}} \def\cprime{$'$}
  \def\cprime{$'$} \def\cprime{$'$} \def\cprime{$'$}
\providecommand{\bysame}{\leavevmode\hbox to3em{\hrulefill}\thinspace}
\providecommand{\MR}{\relax\ifhmode\unskip\space\fi MR }
\providecommand{\MRhref}[2]{%
  \href{http://www.ams.org/mathscinet-getitem?mr=#1}{#2}
}
\providecommand{\href}[2]{#2}

\end{document}